\newtheorem{theorem}{Theorem}[section]
\newtheorem{lemma}{Lemma}[section]
\newtheorem{corollary}{Corollary}[section]
\newtheorem*{corollary*}{Corollary}
\theoremstyle{definition}
\newtheorem{remark}{Remark}[section]
\numberwithin{equation}{section}
\begin{document}

\title[Systems of indefinite differential equations]{Multiplicity of clines for systems \\of indefinite differential equations arising \\ from a multilocus population genetics model}

\author[G.~Feltrin]{Guglielmo Feltrin}

\address{
Department of Mathematics, Computer Science and Physics, University of Udine\\
Via delle Scienze 206, 33100 Udine, Italy}

\email{guglielmo.feltrin@uniud.it}

\author[P.~Gidoni]{Paolo Gidoni}

\address{
Czech Academy of Sciences, Institute of Information Theory and Automation (UTIA)
Department of Decision-Making Theory\\
Pod vod\'{a}renskou ve\v{z}\'{i} 4, CZ-182 08, Prague 8, Czech Republic}

\email{gidoni@utia.cas.cz}

\thanks{Work written under the auspices of the 
Grup\-po Na\-zio\-na\-le per l'Anali\-si Ma\-te\-ma\-ti\-ca, la Pro\-ba\-bi\-li\-t\`{a} e le lo\-ro
Appli\-ca\-zio\-ni (GNAMPA) of the Isti\-tu\-to Na\-zio\-na\-le di Al\-ta Ma\-te\-ma\-ti\-ca (INdAM).
\\
\textbf{Preprint -- September 2019}} 

\subjclass{34B18, 47H11, 92D25.}

\keywords{Neumann problem, indefinite weight, coincidence degree, multiplicity of clines, population genetics models, multilocus models, diploid populations.}

\date{}

\dedicatory{}

\begin{abstract}
We investigate sufficient conditions for the presence of coexistence states for different genotypes in a diploid diallelic population with dominance distributed on a heterogeneous habitat, considering also the interaction between genes at multiple loci. In mathematical terms, this corresponds to the study of the Neumann boundary value problem 
\begin{equation*}
\begin{cases}
\, p_{1}''+\lambda_{1} w_{1}(x,p_{2}) f_{1}(p_{1}) = 0, &\text{in $\Omega$,} \\
\, p_{2}''+\lambda_{2} w_{2}(x,p_{1}) f_{2}(p_{2}) = 0, &\text{in $\Omega$,} \\
\, p_{1}'=p_{2}'=0, &\text{on $\partial\Omega$,} 
\end{cases}
\end{equation*}
where the coupling-weights $w_{i}$ are sign-changing in the first variable, and the nonlinearities $f_{i}\colon\mathopen{[}0,1\mathclose{]}\to\mathopen{[}0,+\infty\mathclose{[}$ satisfy $f_{i}(0)=f_{i}(1)=0$, $f_{i}(s)>0$ for all $s\in\mathopen{]}0,1\mathclose{[}$, and a superlinear growth condition at zero. Using a topological degree approach, we prove existence of $2^{N}$ positive fully nontrivial solutions when the real positive parameters $\lambda_{1}$ and $\lambda_{2}$ are sufficiently large.
\end{abstract}

\maketitle

\section{Introduction}\label{section-1}

Starting from the Eighties, a great deal of attention has been devoted to boundary value problems associated with differential equations of the form
\begin{equation}\label{eq:PDE_indweight}
-\Delta p = w(x) f(p),
\end{equation}
where $w$ changes sign in the domain. Following a terminology popularised in \cite{HeKa-80}, such problems are referred to as \emph{indefinite weight} problems.
Many issues connected to this family of problems, like existence, uniqueness, multiplicity and stability of solutions for \eqref{eq:PDE_indweight} have been largely investigates; among many others, we mention \cite{AlTa-93,AmLG-98,BaPoTe-88,BeCDNi-94,Bu-76,Ra-7374}, and we refer to the introduction in \cite{Fe-18book} for a more complete bibliography and to \cite{Ac-09} for a survey illustrating some examples of models in physics and ecology, where the search of stationary solutions of parabolic equations strongly motivates the study of equation \eqref{eq:PDE_indweight}, also in the ODE case.

More recently, a series of papers (e.g.~\cite{BoFeZa-16,BoFeZa-18tams,FeSo-18non,FeZa-15jde,GaHaZa-03asmm,LGTeZa-14}) has shown the effectiveness of topological methods, such as degree theory and shooting techniques, to study such issues in an ODE setting. More precisely, by analysing the nodal behaviour of the indefinite weight, it has been provided precise multiplicity results for positive solutions of the equation
\begin{equation*}
p'' + w(x) f(p) = 0 
\end{equation*}
considering several types of boundary conditions and several types of nonlinearities $f$ all characterized by a superlinear behaviour at zero.
As we discuss briefly below and more extensively in Section~\ref{section-5.1}, such framework is relevant to justify the coexistence of two competing alleles with dominance in a diploid population.

The main purpose of this paper is to illustrate how such an approach can be successfully extended to the case of systems of indefinite equations.
Indeed, their flexibility is one of the advantages of the application of topological methods. The employment of topological tools to generalize existence results for ODEs to systems of differential equations obtained by suitable coupling of the original equation has been recently applied in various frameworks, for instance in \cite{BoDa-14,FonGarGid16,FonGid19,FonSfe16}.
We remark that such results are not necessarily restricted to small perturbations: indeed, as in our case, they apply also to larger suitable couplings.

In terms of modelling, our generalisation of the problem to the case of systems is justified by considering a nontrivial interaction between genes at multiple loci.
Let us therefore consider a diploid population. We assume that at one locus we have two possible alleles $A$ and $a$, with complete dominance of the allele $A$; whereas at a second locus we have, analogously, two possible alleles $B$ and $b$, with complete dominance of the allele $B$. We denote with $p,q\in \mathopen{[}0,1\mathclose{]}$, respectively, the frequencies of the alleles $a$ and $b$ in the population. We further assume that the population is at equilibrium, with random mating and linkage equilibrium between the two loci. The population is distribute on a heterogeneous bounded habitat $\Omega\subseteq \mathbb{R}$. As we derive rigorously in Section~\ref{section-5}, the fitness of the allele $a$ at a given place $x$ can be expressed as the product of a term $f(p)=p ^{2}(1-p)$, depending on the frequency of the allele and accounting for the dominance of $A$, and of a Carath\'{e}odory function $\lambda w_{p}(x,q)$ which describes how favourable is the habitat to the allele $a$ at $x$, and depends also on the genotype distribution of the population in $x$ with respect to the other locus, expressed through the dependence of the frequency $q$. Such structure is directly calculated by the space-dependent fitnesses of the four possible phenotypes.
We assume that the habitat presents a region $I_{p}\subseteq \Omega$ where it is favourable to the allele $a$ for all the possible situations at the other locus (namely $w_{p}(x,q)>0$ for every $x\in I_{p}$), but that in general the environment is hostile to the allele $a$ (namely $w_{p}(x,q)<\beta(x)$ with $\int_\Omega \beta(x)\,\mathrm{d} x<0$).
The coefficient $\lambda$ measures the intensity of the competition between the alleles compared to the velocity of diffusion, meaning that a larger $\lambda$ corresponds to a larger advantage of one of the allele where it is favoured by the environment.
An analogous characterisation holds for the allele $b$.
We also assume a random dispersal of the population within the habitat, whose intensity is regulated by a function $\kappa(x)>\kappa_{0}>0$.

The search for steady states for the population corresponds to solving the system
\begin{equation}\label{sys-intro-pq}
\begin{cases}
\, \kappa(x) p''+\lambda w_{p}(x,q) f(p) = 0, &\text{in $\Omega$,} \\
\, \kappa(x) q''+\lambda w_{q}(x,p) f(q) = 0, &\text{in $\Omega$,} \\
\, p'=q'=0, &\text{on $\partial\Omega$.} 
\end{cases}
\end{equation}
In particular we are interested in finding steady states where both alleles coexist at each locus, namely such that $0<p(x)<1$ and $0<q(x)<1$ for all $x\in \Omega$. We refer to such solutions of \eqref{sys-intro-pq} as \emph{fully nontrivial solutions}. Such stationary solutions are usually also called \emph{clines}, since they describe the gradual change in space of the frequency of an allele.

As a corollary of our main result (Theorem~\ref{th-main}) we obtain the following necessary condition for the existence of fully nontrivial stationary states (cf.~Corollary~\ref{cor-intro}).

\begin{corollary*}
There exists $\lambda^{*}>0$ such that for every $\lambda >\lambda^{*}$ there exist at least four fully nontrivial solutions to \eqref{sys-intro-pq}.
\end{corollary*}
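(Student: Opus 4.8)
The plan is to realise \eqref{sys-intro-pq} as a particular instance of the system covered by Theorem~\ref{th-main} and then simply read off the conclusion. Since $\kappa(x)>\kappa_{0}>0$, I would first divide each equation of \eqref{sys-intro-pq} by $\kappa(x)$, reducing the system to the normalised form treated in Theorem~\ref{th-main}, with $p_{1}=p$, $p_{2}=q$, nonlinearities $f_{1}=f_{2}=f$ given by $f(s)=s^{2}(1-s)$, parameters $\lambda_{1}=\lambda_{2}=\lambda$, and reduced coupling-weights
\begin{equation*}
w_{1}(x,s)=\frac{w_{p}(x,s)}{\kappa(x)}, \qquad w_{2}(x,s)=\frac{w_{q}(x,s)}{\kappa(x)}.
\end{equation*}
Because $\kappa$ is measurable, bounded, and bounded away from zero, each $w_{i}$ is again a Carath\'{e}odory function, and division by a strictly positive factor leaves unchanged the sign of the weight in the first variable.

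Next I would verify the structural hypotheses of Theorem~\ref{th-main}. For the nonlinearity, $f(s)=s^{2}(1-s)$ manifestly satisfies $f(0)=f(1)=0$, $f(s)>0$ for $s\in\mathopen{]}0,1\mathclose{[}$, and $f(s)/s=s(1-s)\to 0$ as $s\to 0^{+}$, which is exactly the required superlinear behaviour at the origin. For the weights, the favourable region persists after normalisation, since $w_{i}(x,s)>0$ on $I_{p}$ (respectively $I_{q}$) for every admissible value of the coupling variable $s$; it then remains only to check that the hostility condition survives the division by $\kappa$, namely that the reduced weight retains the negative-average character demanded by Theorem~\ref{th-main}.

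The point to be handled with care is precisely this transfer of the hostility condition: from $w_{p}(x,s)<\beta(x)$ with $\int_{\Omega}\beta\dd x<0$ one gets only $w_{1}(x,s)<\beta(x)/\kappa(x)$, and one must ensure that the upper bound $\beta/\kappa$ still integrates to a negative quantity, or, equivalently, reformulate the model's hostility assumption directly for the reduced weight, as carried out in the derivation of Section~\ref{section-5}. Once all hypotheses are confirmed, I would apply Theorem~\ref{th-main} with $N=2$, corresponding to the two loci each endowed with a single favourable region, thereby producing $2^{2}=4$ positive fully nontrivial solutions. Finally, writing $\lambda_{1}^{*}$ and $\lambda_{2}^{*}$ for the two thresholds supplied by the theorem, the choice $\lambda^{*}=\max\{\lambda_{1}^{*},\lambda_{2}^{*}\}$ ensures that $\lambda>\lambda^{*}$ forces both $\lambda_{1},\lambda_{2}>\lambda_{i}^{*}$, so the existence of at least four fully nontrivial solutions holds for every $\lambda>\lambda^{*}$, as claimed.
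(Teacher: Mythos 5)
Your proposal is correct and follows essentially the same route as the paper: Corollary~\ref{cor-intro} is obtained precisely by dividing by $\kappa$, checking that $f(s)=2s^{2}(1-s)$ satisfies \ref{cond:fi*} and \ref{cond:fi0}, observing that the reduced weights are convex combinations of the normalised quantities and hence sandwiched between $\alpha_{a},\beta_{a}$ (resp.\ $\alpha_{b},\beta_{b}$), and applying Theorem~\ref{th-main} with $N=2$ and $\lambda_{1}=\lambda_{2}=\lambda$. The subtlety you flag about transferring the hostility condition through the division by $\kappa$ is exactly how the paper resolves it, namely by imposing the negative-average hypothesis \ref{cond:modbeta} directly on the reduced bounds $\beta_{a}=\max\{\omega^{ab},\omega^{aB}\}/\kappa$ and $\beta_{b}=\max\{\omega^{ba},\omega^{bA}\}/\kappa$.
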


Beyond this motivating application, since the problem has an intrinsic mathematical interest, we remark that our main result deals with a more general family of systems. Indeed, as we discuss in Section~\ref{section-2}, we consider functions $f$ satisfying
\begin{equation*}
f(0)=f(1)=0, \qquad f(s)>0, \; \text{for all $s\in\mathopen{]}0,1\mathclose{[}$,}
\end{equation*}
and having a superlinear growth at zero, namely $f'(0)=0$. Moreover our result applies to an arbitrarily large number of coupled equations.

\medskip

The plan of the paper is as follows. In Section~\ref{section-2} we introduce the mathematical framework and state our main result. Section~\ref{section-3} is devoted to illustrate the abstract degree setting which is employed in the proof of Theorem~\ref{th-main}, given in Section~\ref{section-4}. The paper ends with Section~\ref{section-5}, where we derive the model described above.

\section{Notation, hypotheses and main result}\label{section-2}

We now introduce our working framework, which is more general than the one discussed in the Introduction. In particular we consider a system of $N$ ordinary differential equations, where $N\geq 2$ is an integer. In this section, we list all the hypotheses needed and state our main result.

Let $\Omega = \mathopen{]}\underline{\omega},\overline{\omega}\mathclose{[}$ be an open and bounded interval in $\mathbb{R}$.
For $i=1,\ldots,N$, let $w_{i}\colon\Omega\times\mathbb{R}^{N-1}\to\mathbb{R}$ be an $L^{1}$-Carath\'{e}odory function satisfying
\begin{enumerate}[label=\textup{$(w_{i,*})$}]
\item there exists a (non-empty) closed interval $I_{i}\subseteq\overline{\Omega}$ such that for all $\hat{\xi}\in\mathopen{[}0,1\mathclose{]}^{N-1}$,
\begin{equation*}
w_{i}(x,\hat{\xi}) \geq 0, \quad w_{i}(\cdot,\hat{\xi})\not\equiv0,\quad \text{on $I_{i}$,}
\end{equation*}
and there exist $\alpha_{i},\beta_{i}\in L^{1}(\Omega)$ with
\begin{equation*}
\text{$\alpha_{i}(x)\geq 0$ and $\alpha_{i}\not\equiv0$ on $I_{i}$,} \quad \int_{\Omega} \beta_{i}(x) \,\mathrm{d}x < 0,
\end{equation*}
such that
\begin{equation*}
\alpha_{i}(x) \leq w_{i}(x,\hat{\xi}) \leq \beta_{i}(x), \quad \text{for a.e.~$x\in\Omega$, for all $\hat{\xi}\in\mathopen{[}0,1\mathclose{]}^{N-1}$.}
\end{equation*}
\label{cond:wi*}
\end{enumerate}
Here, the notation $g\not\equiv 0$ means that the function $g$ is not identically zero.
The first part of hypothesis \ref{cond:wi*} is a sign-condition which states that $w_{i}$ is positive on an interval of $\Omega$ with non-zero measure, while the second part implies that $w_{i}$ is $L^{1}$-uniformly bounded in the second variable and $\beta_{i}$ is negative somewhere (and so also the $w_i$). We notice that the weights $w_{i}$ can change sign infinitely many times.

For $i=1,\ldots,N$, let $f_{i}\colon\mathopen{[}0,1\mathclose{]}\to\mathopen{[}0,+\infty\mathclose{[}$ be a continuously differentiable function such that
\begin{enumerate}[label=\textup{$(f_{i,*})$}]
\item $f_{i}(0)=f_{i}(1)=0$, $f_{i}(s)>0$, for all $s\in\mathopen{]}0,1\mathclose{[}$. \label{cond:fi*}
\end{enumerate}
Moreover, we assume a superlinear growth condition at zero, that is
\begin{enumerate}[label=\textup{$(f_{i,0})$}]
\item $f_{i}'(0)=0$.
\label{cond:fi0}
\end{enumerate}

We now introduce a useful notation. Let $i\in\{1,\ldots,N\}$ and $J\subseteq\mathbb{R}$. Given a vector field $p=(p_{1},\ldots,p_{N})\colon J\to\mathbb{R}^{N}$ we denote by
\begin{equation*}
\hat{p}^{i}(x) = \bigl{(} p_{1}(x),\ldots, p_{i-1}(x), p_{i+1}(x), \ldots, p_{N}(x) \bigr{)}\in\mathbb{R}^{N-1}, \quad x\in J,
\end{equation*}
the vector of length $N-1$ obtained from $p$ by removing the $i$-th component.

We deal with the following system of $N$ differential equations together with Neumann boundary conditions
\begin{equation}\label{system-S}
\begin{cases}
\, p_{i}''+\lambda_{i} w_{i}(x,\hat{p}^{i}) f_{i}(p_{i}) = 0, &\text{in $\Omega$,} \\
\, p_{i}'=0, &\text{on $\partial\Omega$,} \\
\, i=1,\ldots,N.
\end{cases}
\tag{$\mathcal{S}$}
\end{equation}
Solutions to \eqref{system-S} are meant in the Carath\'{e}odory sense, that is, a function $p=(p_{1},\ldots,p_{N})$, with $p_{i} \colon \overline{\Omega} \to \mathopen{[}0,1\mathclose{]}$, is a solution of \eqref{system-S} if $p_{i}$ is 
continuously differentiable in $\overline{\Omega}$ and $p_{i}'$ is absolutely continuous in $\Omega$, the differential equations in \eqref{system-S} are satisfied almost everywhere and $p_{i}'(x)=0$ for all $x\in\partial\Omega$, for every $i=1,\ldots,N$. We stress that, since $f_{i}$ is defined on $\mathopen{[}0,1\mathclose{]}$, we implicitly assume that $0\leq p_{i}(x) \leq 1$ for all $x\in\overline{\Omega}$, for every $i=1,\ldots,N$.

Hypotheses \ref{cond:fi*} ensures that $p=(p_{1},\ldots,p_{N})$ with either $p_{i}\equiv0$ or $p_{i}\equiv1$ for every $i=1,\ldots,N$, is a (constant) \textit{trivial} solution of \eqref{system-S}. In the sequel, we call \textit{semitrivial} a solution with $p_{i}\not\equiv0$ and $p_{i}\not\equiv1$ for some $i=1,\ldots,N$, and \textit{fully nontrivial} if $p_{i}\not\equiv0$ and $p_{i}\not\equiv1$ for every $i=1,\ldots,N$.

Our main result is the following.

\begin{theorem}\label{th-main}
For $i=1,\ldots,N$, let $w_{i}\colon\Omega\times\mathbb{R}^{N-1}\to\mathbb{R}$ be an $L^{1}$-Carath\'{e}odory function satisfying \ref{cond:wi*}. For $i=1,\ldots,N$, let $f_{i}\colon\mathopen{[}0,1\mathclose{]}\to\mathopen{[}0,+\infty\mathclose{[}$ be a continuously differentiable function satisfying \ref{cond:fi*} and \ref{cond:fi0}. 
Then, there exists $\lambda^{*}>0$ such that the following holds: if $\lambda_{i}>\lambda^{*}$ for every $i=1,\ldots,N$, then there exist at least $2^{N}$ fully nontrivial solutions to \eqref{system-S}. Moreover, there exists at least $4^{N}-2^{N}$ semitrivial solutions to \eqref{system-S}.
\end{theorem}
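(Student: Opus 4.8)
The plan is to recast \eqref{system-S} as a coincidence equation and to produce the $4^{N}$ solutions through a product degree argument, exploiting that, by \ref{cond:wi*}, each weight $w_{i}$ is bounded between $\alpha_{i}$ and $\beta_{i}$ \emph{uniformly} with respect to the coupling variable $\hat{\xi}\in\mathopen{[}0,1\mathclose{]}^{N-1}$. First I would set the problem in $X=\bigl(C(\overline{\Omega})\bigr)^{N}$, writing $Lp=(-p_{i}'')_{i}$ with Neumann boundary conditions and $\mathcal{N}_{\lambda}p=\bigl(\lambda_{i}w_{i}(x,\hat{p}^{i})f_{i}(p_{i})\bigr)_{i}$; since $\ker L$ consists of the constant fields, $L$ is Fredholm of index zero and the natural tool is the Mawhin coincidence degree $D_{L}(L-\mathcal{N}_{\lambda},\mathcal{O})$ of Section~\ref{section-3}. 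Extending each $f_{i}$ by zero outside $\mathopen{[}0,1\mathclose{]}$, a maximum-principle argument (using $f_{i}(0)=f_{i}(1)=0$) shows a priori that every solution satisfies $0\le p_{i}\le 1$, so that $\mathcal{N}_{\lambda}$ is bounded and the extension creates no spurious solutions.

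The heart of the argument is a \emph{scalar analysis that is uniform in the coupling}. Fix $i$ and regard $\hat{p}^{i}$ as an arbitrary continuous $\mathopen{[}0,1\mathclose{]}^{N-1}$-valued datum; the associated scalar Neumann problem has weight $w_{i}(\cdot,\hat{p}^{i})$ squeezed between $\alpha_{i}$ and $\beta_{i}$, with $\alpha_{i}\ge 0$, $\alpha_{i}\not\equiv 0$ on $I_{i}$ and $\int_{\Omega}\beta_{i}<0$. I would invoke the scalar indefinite-weight theory of the cited references (whose a priori estimates and degree computations depend on the weight only through $\alpha_{i}$, $\beta_{i}$ and $I_{i}$) to obtain, for $\lambda_{i}$ larger than a threshold $\lambda_{i}^{*}$ \emph{independent of the coupling}: the two constant solutions $0$ and $1$, together with at least two nontrivial solutions, and four pairwise disjoint open sets $\mathcal{O}_{i}^{0},\mathcal{O}_{i}^{1},\mathcal{O}_{i}^{A},\mathcal{O}_{i}^{B}$ carrying nonzero scalar coincidence degrees $d_{i}^{0},d_{i}^{1},d_{i}^{A},d_{i}^{B}$. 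Here $f_{i}'(0)=0$ from \ref{cond:fi0} is what linearises the equation at $0$ and, through a Lyapunov--Schmidt reduction onto $\ker L_{i}$ together with the sign of $\int_{\Omega}w_{i}<0$, makes the small-ball degrees around the constants nonzero; the negative mass $\int_{\Omega}\beta_{i}<0$ furnishes the a priori bound on large sets that separates and detects the nontrivial solutions.

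Next I would pass from the scalar degrees to the coupled system by a \emph{decoupling homotopy}. Fixing reference points $\hat{\xi}_{i}^{0}\in\mathopen{[}0,1\mathclose{]}^{N-1}$, consider the homotopy replacing $w_{i}(x,\hat{p}^{i})$ by $w_{i}\bigl(x,(1-t)\hat{p}^{i}+t\,\hat{\xi}_{i}^{0}\bigr)$, $t\in\mathopen{[}0,1\mathclose{]}$. Since $\mathopen{[}0,1\mathclose{]}^{N-1}$ is convex, the frozen argument stays in the admissible range for every $t$, so the uniform scalar estimates of the previous step guarantee that no solution reaches $\partial\bigl(\prod_{i}\mathcal{O}_{i}^{(c_{i})}\bigr)$ along the homotopy; the homotopy is thus admissible. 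At $t=1$ the operator is block-diagonal, and by the multiplicativity of the coincidence degree on product domains one gets, for every choice $(c_{1},\dots,c_{N})\in\{0,1,A,B\}^{N}$,
\begin{equation*}
D_{L}\Bigl(L-\mathcal{N}_{\lambda},\; \prod\nolimits_{i}\mathcal{O}_{i}^{(c_{i})}\Bigr)=\prod_{i=1}^{N}d_{i}^{(c_{i})}\neq 0 .
\end{equation*}
Hence each of these $4^{N}$ mutually disjoint product domains contains a solution of \eqref{system-S}, and distinct domains give distinct solutions. Choosing $\lambda^{*}=\max_{i}\lambda_{i}^{*}$, the $2^{N}$ choices with every $c_{i}\in\{A,B\}$ yield fully nontrivial solutions, while the remaining $4^{N}-2^{N}$ choices (those with $c_{i}\in\{0,1\}$ for at least one $i$) yield the semitrivial ones.

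I expect the main obstacle to be the admissibility-and-multiplicativity step: one must verify that the scalar a priori bounds and the boundary-avoidance are genuinely \emph{uniform} over all admissible couplings (this is precisely why the bounds in \ref{cond:wi*} are imposed for every $\hat{\xi}\in\mathopen{[}0,1\mathclose{]}^{N-1}$), and that the coincidence degree of the block-diagonal limit factors as the product of the scalar degrees. A secondary technical point is the explicit evaluation of the four scalar degrees $d_{i}^{(c_{i})}$ — in particular making the degrees around the constant solutions $0$ and $1$ nonzero through the reduction on $\ker L_{i}$ — which rests on \ref{cond:fi0} and on the sign of $\int_{\Omega}w_{i}$.
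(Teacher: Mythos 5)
Your route to the $2^{N}$ fully nontrivial solutions is genuinely different from the paper's and is essentially viable. The paper never decouples the system: it computes the coincidence degree of the \emph{coupled} operator directly on the product sets $\mathcal{B}_{d}^{\mathcal{I}}$ (Lemmas~\ref{lem-deg0} and~\ref{lem-deg1}, giving \eqref{eq-deg-B}) and then extracts the degree $(-1)^{\#\mathcal{I}}$ on the products of ``annuli'' by additivity/excision and a combinatorial argument. You instead propose a decoupling homotopy $w_{i}(x,(1-t)\hat{p}^{i}+t\hat{\xi}_{i}^{0})$ followed by the product formula for the degree of a block-diagonal operator. This works: the a priori estimates of Lemmas~\ref{lem-rho}, \ref{lem-r}, \ref{lem-R} use $w_{i}$ only through $\alpha_{i}$, $\beta_{i}$, $I_{i}$, hence hold uniformly over all $\mathopen{[}0,1\mathclose{]}^{N-1}$-valued arguments, so no solution touches the boundary of a product of annuli along your homotopy; at $t=1$ the scalar degrees are $-1$ and $+1$ on the two annuli, and their product reproduces $(-1)^{\#\mathcal{I}}$. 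What you buy is conceptual transparency (the system result is literally a product of scalar results); what the paper's route buys is that it never needs the product formula nor an extra homotopy, only the two degree values $0$ and $1$ on product sets of balls and $U$-sets. Two smaller points: extend $f_{i}$ outside $\mathopen{[}0,1\mathclose{]}$ as the paper does (by $-s$ and $1-s$), not by zero, otherwise every constant outside $\mathopen{[}0,1\mathclose{]}$ becomes a spurious solution and the maximum principle you invoke fails; and well-definedness of the degree on the $U$-type sets requires ruling out solutions with $\|p_{i}\|_{\infty}=1$ via Cauchy uniqueness.

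There is, however, a genuine gap in your four-cell scheme: the claim that the cell around the constant solution $1$ carries a nonzero scalar degree. Condition \ref{cond:fi0} gives $f_{i}'(0)=0$ but says nothing at $s=1$, and the sign information goes the wrong way there. On the reduced map $h_{i}^{\#}$ of Lemma~\ref{lem-deg1}, since $\tfrac{1}{|\Omega|}\int_{\Omega}w_{i}<0$ and $f_{i}>0$ on $\mathopen{]}0,1\mathclose{[}$, the function $-h_{i}^{\#}$ is positive for $\xi_{i}$ slightly below $1$ \emph{and} positive for $\xi_{i}$ slightly above $1$ (where the truncation gives $\xi_{i}-1$); there is no sign change, so the local degree at the constant $1$ is $0$ (equivalently: the degree on $B(0,1+\varepsilon)$ equals the degree on $B(0,R)$, both equal to $1$, so the shell containing the constant $1$ has degree $0$). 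Hence all $4^{N}-3^{N}$ of your product cells involving a ``$1$''-component have zero degree and detect nothing, and your argument does not produce the semitrivial solutions having some component identically $1$. Those must be obtained differently, e.g.\ by freezing the components chosen to be constant at $0$ or $1$ (which is consistent since $f_{i}(0)=f_{i}(1)=0$) and applying the existence of $2^{N-k}$ fully nontrivial solutions to the reduced system in the remaining $N-k$ unknowns; summing over the choices recovers the count $4^{N}-2^{N}$. The $2^{N}$ fully nontrivial solutions, which use only your cells $A$ and $B$, are unaffected by this gap.
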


\section{Abstract degree setting}\label{section-3}

This section presents the abstract setting of the coincidence degree in the framework of system \eqref{system-S} and  two crucial lemmas for the computation of the degree. In the following, for $i=1,\ldots,N$, we assume that $\lambda_{i}>0$, $w_{i}$ is an $L^{1}$-Carath\'{e}odory function satisfying \ref{cond:wi*}, $f_{i}\in\mathcal{C}(\mathopen{[}0,1\mathclose{]})$ satisfying \ref{cond:fi*}.

As a first step, we extend the nonlinearities contained in \eqref{system-S} to the whole real line, by defining the $L^{1}$-Carath\'{e}odory function\begin{equation*}
h(x,\xi) = (h_{1}(x,\xi), \ldots, h_{N}(x,\xi)), \quad \text{$x\in\Omega$, $\xi=(\xi_{1},\ldots,\xi_{N})\in\mathbb{R}^{N}$,}
\end{equation*}
where, for $i=1,\ldots,N$, we set
\begin{equation*}
h_{i}(x,\xi) = 
\begin{cases}
\, -\xi_{i}, & \text{if $\xi_{i}\leq 0$,} \\
\, \lambda_{i} w_{i}(x,\hat{\xi}^{i}) f_{i}(\xi_{i}), & \text{if $0\leq \xi_{i} \leq 1$,} \\
\, 1-\xi_{i}, & \text{if $\xi_{i}\geq 1$,}
\end{cases}
\quad \text{$x\in\Omega$, $\xi\in\mathbb{R}^{N}$,}
\end{equation*}
with $\hat{\xi}^{i}=(\xi_{1},\ldots,\xi_{i-1},\xi_{i+1},\ldots,\xi_{N})\in\mathbb{R}^{N-1}$.

In this manner, as a consequence of the weak maximum principle (based on a convexity argument), every solution $p$ of
\begin{equation}\label{system-H}
\begin{cases}
\, p_{i}''+\lambda_{i} h_{i}(x,p) = 0, &\text{in $\Omega$,} \\
\, p_{i}'=0, &\text{on $\partial\Omega$,} \\
\, i=1,\ldots,N,
\end{cases}
\tag{$\mathcal{H}$}
\end{equation}
satisfies $0\leq p_{i}(x)\leq 1$, for all $x\in\overline{\Omega}$ (for $i=1,\ldots,N$), and thus solves \eqref{system-S}.

\subsection{Operatorial formulation}\label{section-3.1}

We plan to exploit the coincidence degree theory introduced and developed by J.~Mawhin (cf.~\cite{GaMa-77,Ma-79,Ma-93}). First of all, we aim to write system \eqref{system-H} as an operatorial equation of the form
\begin{equation}\label{eq-coinc}
Lp = Np,\quad p\in \mathrm{dom}\,L.
\end{equation}
As quite standard in the framework of Neumann boundary value problems, we define $L\colon \mathrm{dom}\,L \to L^{1}(\Omega,\mathbb{R}^{N})$ as the linear differential operator 
\begin{equation*}
(Lp)(x)= -p''(x), \quad x\in\Omega,
\end{equation*}
with
\begin{equation*}
\mathrm{dom}\,L = \bigl{\{} p \in W^{2,1}(\overline{\Omega},\mathbb{R}^{N}) \colon p_{i}'(\underline{\omega}) = p_{i}'(\overline{\omega})=0, \; i=1,\ldots,N \bigr{\}} \subseteq \mathcal{C}(\overline{\Omega},\mathbb{R}^{N}) ,
\end{equation*}
and
$N \colon \mathcal{C}(\overline{\Omega},\mathbb{R}^{N}) \to L^{1}(\Omega,\mathbb{R}^{N})$ as the Nemytskii operator induced by the function $h$, precisely
\begin{equation*}
(Np)(x) = h(x,p(x)), \quad x\in\Omega.
\end{equation*}

We now recall the main features of coincidence degree theory which will be crucial in the sequel.
Let
\begin{equation*}
X = \prod_{i=1}^{N} X_{i} = \mathcal{C}(\overline{\Omega},\mathbb{R}^{N}) \quad \text{ and } \quad Z = \prod_{i=1}^{N} Z_{i} = L^{1}(\Omega,\mathbb{R}^{N}),
\end{equation*}
where, for $i=1,\ldots,N$, $X_{i} = \mathcal{C}(\overline{\Omega})$ is the Banach space of continuous functions $p_{i} \colon \overline{\Omega} \to \mathbb{R}$,
endowed with the $\sup$-norm $\|p_{i}\|_{\infty} = \max_{x\in \overline{\Omega}} \lvert p_{i}(x)\rvert$,
and $Z_{i}=L^{1}(\Omega)$ is the Banach space of Lebesgue integrable functions $z_{i} \colon \Omega \to \mathbb{R}$, endowed with the $L^{1}$-norm $\|z_{i}\|_{L^{1}}= \int_{\Omega} \lvert z_{i}(x) \rvert\,\mathrm{d}x$.
The spaces $X$ and $Z$ are endowed with the standard norms.

For $i=1,\ldots,N$, let $L_{i}\colon \mathrm{dom}\,L_{i} \to Z_{i}$ be the $i$-th component of $L =(L_{1},\ldots,L_{N})$, that is $\mathrm{dom}\,L_{i} = \bigl{\{} p_{i} \in W^{2,1}(\overline{\Omega}) \colon p_{i}'(\underline{\omega}) = p_{i}'(\overline{\omega})=0 \bigr{\}} \subseteq X_{i}$ and $L_{i}p_{i}= -p''_{i}$.
We observe that $\ker L_{i} \equiv \mathbb{R}$ and $\mathrm{coker} L_{i} \equiv \mathbb{R}$ are made up of constant functions and 
\begin{equation*}
\mathrm{Im}\,L_{i} = \biggl{\{} z_{i}\in Z_{i} \colon \int_{\Omega}z_{i}(x) \,\mathrm{d}x= 0 \biggr{\}}.
\end{equation*}
We also define the projectors $P\colon X\to \ker L$ and $Q\colon Z \to \mathrm{coker}\,L$ with components
\begin{equation*}
P_{i} p_{i} = \dfrac{1}{|\Omega|}\int_{\Omega}p_{i}(x) \,\mathrm{d}x, \quad Q_{i} z_{i} = \dfrac{1}{|\Omega|}\int_{\Omega}z_{i}(x) \,\mathrm{d}x, \quad i=1,\ldots,N,
\end{equation*}
and the map $K=(K_{1},\ldots,K_{N})\colon \mathrm{Im}\,L\to\mathrm{dom}\,L\cap\ker P$ as the right inverse of $L$, namely, for $i=1,\ldots,N$, $K_{i}$ associates to $v\in L^{1}(\Omega)$ with $\int_{\Omega} v(x)\,\mathrm{d}x =0$ the unique solution of $p_{i}''+v(x) =0$ with $\int_{\Omega} p_{i}(x)\, \mathrm{d}x=0$.

With the above position, one can show that $p$ is a solution of \eqref{system-H} if and only if $p$ satisfy the coincidence equation \eqref{eq-coinc}.
From Mawhin's coincidence degree theory, it is standard to prove that \eqref{eq-coinc} is equivalent to the fixed point problem
\begin{equation*}
p = \Phi (p), \quad p\in X,
\end{equation*}
where $\Phi =(\Phi_{1},\ldots,\Phi_{N}) \colon X \to X$ is the completely continuous operator
\begin{equation*}
\Phi(p) = Pp + JQNp + K(\mathrm{Id}_{Z}-Q)Np, \quad p\in X,
\end{equation*}
where $J\colon \mathrm{coker}\,L\to\ker L$ is the identity map.

\medskip

Given an open and bounded set $\mathcal{B}\subseteq X$ such that
\begin{equation*}
L p \neq N p, \quad \text{for all $p\in\partial \mathcal{B}\cap\mathrm{dom}\,L$,}
\end{equation*}
the \textit{coincidence degree $\mathrm{D}_{L}(L-N,\mathcal{B})$ of $L$ and $N$ in $\mathcal{B}$} is defined as
\begin{equation*}
\mathrm{D}_{L}(L-N, \mathcal{B}) = \textrm{deg}_{\textrm{LS}}(\mathrm{Id}_{X}-\Phi, \mathcal{B}, 0),
\end{equation*}
where ``$\textrm{deg}_{\textrm{LS}}$'' denotes the Leray--Schauder topological degree.
The coincidence degree satisfies all the usual properties of Brouwer's and Leray--Schauder degrees, like additivity/excision and homotopic invariance properties. In particular, equation \eqref{eq-coinc} has at least a solution in $\mathcal{B}$ if $\mathrm{D}_{L}(L-N,\mathcal{B})\neq0$.
For a more detailed discussion, we refer to \cite{GaMa-77,Ma-79,Ma-93}.

\subsection{Technical degree lemmas}\label{section-3.2}

We present two lemmas for the computation of the coincidence degree on open bounded sets of the form
\begin{equation*}
\mathcal{B}_{d}^{\mathcal{I}} = \prod_{i=1}^{N} B_{i}, \quad \text{with $d=(d_{1},\ldots,d_{N})\in\mathopen{]}0,1\mathclose{[}^{N}$,}
\end{equation*}
where
\begin{equation*}
B_{i}= 
\begin{cases}
\, B(0,d_{i}), & \text{if $i\notin\mathcal{I}$,} \\
\, U_{d_{i},I_{i}} = \biggl{\{} p_{i} \in X_{i} \colon \|p_{i}\|_{\infty} <1, \; \displaystyle \max_{x\in I_{i}} |p_{i}(x)| < d_{i} \biggr{\}},
& \text{if $i\in\mathcal{I}$,}
\end{cases}
\end{equation*}
denoting with $B(0,d_i)$ the open ball in $X_i$ with center $0$ and radius $d_i$, and recalling the interval $I_{i}$ introduced in hypothesis \ref{cond:wi*}.

The first lemma presents sufficient conditions for zero degree.

\begin{lemma}\label{lem-deg0}
For $i=1,\ldots,N$, let $w_{i}\colon\Omega\times\mathbb{R}^{N-1}\to\mathbb{R}$ be an $L^{1}$-Carath\'{e}odory function satisfying \ref{cond:wi*}. For $i=1,\ldots,N$, let $f_{i}\colon\mathopen{[}0,1\mathclose{]}\to\mathopen{[}0,+\infty\mathclose{[}$ be a continuous function satisfying \ref{cond:fi*}. 
Let $\mathcal{I}\subseteq\{1,\ldots,N\}$ with $\mathcal{I}\neq\emptyset$, $d=(d_{1},\ldots,d_{N})\in\mathopen{]}0,1\mathclose{[}^{N}$, and assume that there exists $v = (v_{1},\ldots,v_{N})\in L^{1}(\Omega,\mathbb{R}^{N})$, with $v\not\equiv0$, such that the following two properties hold:
\begin{enumerate}[label=\textup{$(H_{\arabic*})$}]
\item
If $\mu \geq 0$ and $p$ is a solution of
\begin{equation}\label{system-Fa}
\begin{cases}
\, p_{i}''+ \lambda_{i} w_{i}(x,\hat{p}^{i}) f_{i}(p_{i}) + \mu v_{i}(x) = 0, &\text{in $\Omega$,} \\
\, p_{i}'=0, &\text{on $\partial\Omega$,} \\
\, i=1,\ldots,N,
\end{cases}
\end{equation}
then 
\begin{align*}
& \|p_{i}\|_{\infty} \neq d_{i}, \quad \text{for all $i\in\{1,\ldots,N\}\setminus \mathcal{I}$,} \\
& \max_{x\in I_{i}} |p_{i}(x)| \neq d_{i}, \quad \text{for all $i\in\mathcal{I}$.}
\end{align*}
\label{cond-H1}
\item
There exists $\mu_{0} \geq 0$ such that problem \eqref{system-Fa}, with $\mu=\mu_{0}$, has no solution $p$ with 
\begin{align*}
& \|p_{i}\|_{\infty} \leq d_{i}, \quad \text{for all $i\in\{1,\ldots,N\}\setminus \mathcal{I}$,} \\
& \max_{x\in I_{i}} |p_{i}(x)| \leq d_{i}, \quad \text{for all $i\in\mathcal{I}$.}
\end{align*}
\label{cond-H2}
\end{enumerate}
Then, it holds that $\mathrm{D}_{L}(L-N,\mathcal{B}_{d}^{\mathcal{I}}) = 0$.
\end{lemma}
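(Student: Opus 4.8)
The plan is to compute $\mathrm{D}_{L}(L-N,\mathcal{B}_{d}^{\mathcal{I}})$ by a homotopy in the parameter $\mu$: I would deform the original coincidence problem (which corresponds to $\mu=0$) into the perturbed one at $\mu=\mu_{0}$, using \ref{cond-H1} to keep the deformation admissible on the boundary and \ref{cond-H2} to force a vanishing degree at the endpoint.

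First I would recast \eqref{system-Fa} as a coincidence equation. For $\mu\geq0$ I set $N_{\mu}p=Np+\mu v$, with $v$ the datum in the statement; then $p$ solves \eqref{system-Fa} if and only if $Lp=N_{\mu}p$, in the same way that \eqref{system-H} corresponds to \eqref{eq-coinc}. Since $v\in L^{1}(\Omega,\mathbb{R}^{N})$ is fixed, the associated operator
\begin{equation*}
\Phi_{t}(p)=Pp+JQN_{t\mu_{0}}p+K(\mathrm{Id}_{Z}-Q)N_{t\mu_{0}}p,\qquad t\in[0,1],
\end{equation*}
is completely continuous and depends continuously (indeed affinely) on $t$, so that $t\mapsto N_{t\mu_{0}}$ is an admissible $L$-compact homotopy joining $N_{0}=N$ to $N_{\mu_{0}}$. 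By the homotopy invariance of the coincidence degree it will then be enough to show that this homotopy has no coincidence points on $\partial\mathcal{B}_{d}^{\mathcal{I}}$ and to evaluate the degree at the endpoint $t=1$.

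Next I would verify the admissibility of the homotopy, which is the heart of the matter. A coincidence point of $(L,N_{t\mu_{0}})$ is exactly a solution of \eqref{system-Fa} with $\mu=t\mu_{0}\geq0$, and a point $p\in\partial\mathcal{B}_{d}^{\mathcal{I}}$ lies in $\overline{\mathcal{B}_{d}^{\mathcal{I}}}$ with at least one defining inequality turned into an equality. Hypothesis \ref{cond-H1} rules out precisely the thresholds $\|p_{i}\|_{\infty}=d_{i}$ for $i\notin\mathcal{I}$ and $\max_{x\in I_{i}}|p_{i}(x)|=d_{i}$ for $i\in\mathcal{I}$. The only remaining face of the boundary is $\|p_{i}\|_{\infty}=1$ for some $i\in\mathcal{I}$, and excluding coincidence points there is where I expect the real work: using $f_{i}(1)=0$ and $w_{i}\geq0$ on $I_{i}$ together with $\max_{I_{i}}|p_{i}|\leq d_{i}<1$, I would argue by a maximum-principle/convexity estimate that a solution of \eqref{system-Fa} cannot attain sup-norm $1$ while staying below $d_{i}$ on $I_{i}$. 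The hard part is that, for $\mu>0$, the perturbation $\mu v$ disturbs the strong maximum principle that cleanly handles the case $\mu=0$ (where any solution touching $1$ must be $\equiv1$, hence lie outside $\overline{\mathcal{B}_{d}^{\mathcal{I}}}$), so this exclusion must be carried out carefully and is the main obstacle of the proof.

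Finally I would evaluate the degree at $t=1$. By \ref{cond-H2} problem \eqref{system-Fa} with $\mu=\mu_{0}$ has no solution in $\overline{\mathcal{B}_{d}^{\mathcal{I}}}$, so $(L,N_{\mu_{0}})$ has no coincidence point there; hence $\mathrm{D}_{L}(L-N_{\mu_{0}},\mathcal{B}_{d}^{\mathcal{I}})$ is well defined and, by the existence property of the coincidence degree, vanishes. Combining this with the homotopy invariance secured in the previous step gives
\begin{equation*}
\mathrm{D}_{L}(L-N,\mathcal{B}_{d}^{\mathcal{I}})=\mathrm{D}_{L}(L-N_{\mu_{0}},\mathcal{B}_{d}^{\mathcal{I}})=0,
\end{equation*}
which is the desired conclusion.
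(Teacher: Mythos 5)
Your overall strategy is exactly the paper's: deform $N$ into $N+\mu_{0}v$ through $N+\mu v$, use \ref{cond-H1} to keep $\partial\mathcal{B}_{d}^{\mathcal{I}}$ free of coincidence points, and use \ref{cond-H2} together with the existence property of the degree to get the value zero at the endpoint $\mu=\mu_{0}$. The endpoint evaluation and the homotopy-invariance step are fine as you state them. There is, however, a genuine gap, and it is precisely the step you flag as ``the main obstacle'' and then leave open: excluding coincidence points on the face of $\partial\mathcal{B}_{d}^{\mathcal{I}}$ where $\|p_{i}\|_{\infty}=1$ and $\max_{x\in I_{i}}|p_{i}(x)|\leq d_{i}$ for some $i\in\mathcal{I}$. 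This face is not covered by \ref{cond-H1}, and without handling it the degree $\mathrm{D}_{L}(L-N-\mu v,\mathcal{B}_{d}^{\mathcal{I}})$ is not known to be well defined for every $\mu\in\mathopen{[}0,\mu_{0}\mathclose{]}$, so the homotopy argument does not go through. A proof cannot end with ``this exclusion must be carried out carefully''; it has to carry it out.

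The paper closes this face in two moves that you should make explicit. First, a coincidence point of $L$ and $N+\mu v$ solves the truncated system \eqref{system-HFa}, and the weak maximum principle gives $0\leq p_{i}(x)\leq 1$ for all $i$; this is also the passage that lets you apply \ref{cond-H1} and \ref{cond-H2}, which are statements about \eqref{system-Fa}, to solutions of the truncated coincidence equation --- a step you use implicitly but never justify. Second, if $\|p_{i}\|_{\infty}=1$ then $p_{i}$ attains the value $1$ at some $x_{0}$ with $p_{i}'(x_{0})=0$ (interior maximum or Neumann condition), and since $f_{i}(1)=0$ the paper invokes uniqueness for the Cauchy problem of the $i$-th equation with data $(1,0)$ to force $p_{i}\equiv1$, which contradicts $\max_{x\in I_{i}}|p_{i}(x)|\leq d_{i}<1$. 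Note that the convexity estimate you sketch cannot work here: $w_{i}\geq0$ only on $I_{i}$, where $p_{i}$ is already known to be at most $d_{i}$, whereas the sup-norm $1$ would be attained off $I_{i}$, where $w_{i}$ has no sign. Your suspicion that the perturbation $\mu v_{i}$ makes this delicate is legitimate --- the constant $1$ solves that Cauchy problem only where $\mu v_{i}$ vanishes, so even the paper's one-line justification is cleanest when $\mu v_{i}\equiv0$ --- but some version of this tangency-at-level-one argument is what must be supplied to complete the proof.
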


\begin{proof}
Let $\mathcal{I}\subseteq\{1,\ldots,N\}$ with $\mathcal{I}\neq\emptyset$ and $d=(d_{1},\ldots,d_{N})\in\mathopen{]}0,1\mathclose{[}^{N}$. Let $v = (v_{1},\ldots,v_{N})\in L^{1}(\Omega,\mathbb{R}^{N})$, with $v\not\equiv0$, be such that hypotheses \ref{cond-H1} and \ref{cond-H2} hold.

We study the parameter-dependent problem
\begin{equation}\label{system-HFa}
\begin{cases}
\, p_{i}'' + h_{i}(x,p) + \mu v_{i}(x) = 0, &\text{in $\Omega$,} \\
\, p_{i}'=0, &\text{on $\partial\Omega$,} \\
\, i=1,\ldots,N,
\end{cases}
\end{equation}
for $\mu \geq 0$, which can be equivalently written as a coincidence equation in the space $X$
\begin{equation*}
Lp = Np + \mu v, \quad p \in \mathrm{dom}\,L.
\end{equation*}
We notice that if $p=(p_{1},\ldots,p_{N})$ is a solution of \eqref{system-HFa}, then the weak maximum principle ensures that $0 \leq p_{i}(x)\leq 1$ for all $x\in \overline{\Omega}$, for every $i=1,\ldots,N$, and, indeed, $p$ solves \eqref{system-Fa}.

We first observe that $\|p_{i}\|_{\infty}<1$ for every $p_{i}\in\partial U_{d_{i},I_{i}}$, for every $i\in\mathcal{I}$, due the uniqueness of the solution of the Cauchy problem associated with the $i$-th equation in \eqref{system-HFa}. Therefore, by condition \ref{cond-H1}, the coincidence degree $\mathrm{D}_{L}(L-N - \mu v,\mathcal{B}_{d}^{\mathcal{I}})$ is well-defined for every $\mu\geq0$. Now, using $\mu$ as a homotopy parameter and exploiting the homotopy invariance property of the coincidence degree, we have that
\begin{equation*}
\mathrm{D}_{L}(L-N,\mathcal{B}_{d}^{\mathcal{I}}) = \mathrm{D}_{L}(L-N - \mu_{0} v,\mathcal{B}_{d}^{\mathcal{I}}) = 0,
\end{equation*}
where the last equality follows from \ref{cond-H2}.
This concludes the proof.
\end{proof}

The second lemma states a sufficient condition for non-zero degree in
\begin{equation*}
\mathcal{B}_{d}^{\emptyset} =
 B(0,d_{1}) \times \dots \times B(0,d_{N}) \subseteq X, \quad \text{with $d=(d_{1},\ldots,d_{N})\in\mathopen{]}0,1\mathclose{[}^{N}$,}
\end{equation*}
namely in a Cartesian product of open balls in $\mathcal{C}(\overline{\Omega})$.
We recall that
\begin{equation*}
\partial \mathcal{B}_{d}^{\emptyset} = \bigcup_{i=1}^{N} \Bigl{(} \overline{B(0,d_{1})}\times \dots \times \overline{B(0,d_{i-1})} \times \partial B(0,d_{i}) \times \overline{B(0,d_{i+1})}\dots \times \overline{B(0,d_{N})} \Bigr{)}.
\end{equation*}

\begin{lemma}\label{lem-deg1}
For $i=1,\ldots,N$, let $w_{i}\colon\Omega\times\mathbb{R}^{N-1}\to\mathbb{R}$ be an $L^{1}$-Carath\'{e}odory function satisfying \ref{cond:wi*}. For $i=1,\ldots,N$, let $f_{i}\colon\mathopen{[}0,1\mathclose{]}\to\mathopen{[}0,+\infty\mathclose{[}$ be a continuous function satisfying \ref{cond:fi*}. 
Let $d=(d_{1},\ldots,d_{N})\in\mathopen{]}0,1\mathclose{[}^{N}$ and assume that the following property holds.
\begin{enumerate}[label=\textup{$(H_{3})$}]
\item If $\vartheta\in \mathopen{]}0,1\mathclose{]}$ and $p$ is a solution of
\begin{equation}\label{system-S_theta}
\begin{cases}
\, p_{i}''+\vartheta \lambda_{i} w_{i}(x,\hat{p}^{i}) f_{i}(p_{i}) = 0, &\text{in $\Omega$,} \\
\, p_{i}'=0, &\text{on $\partial\Omega$,} \\
\, i=1,\ldots,N,
\end{cases}
\end{equation}
then $\|p_{i}\|_{\infty} \neq d_{i}$, for all $i\in\{1,\ldots,N\}$.
\label{cond-H3}
\end{enumerate}
Then, it holds that $\mathrm{D}_{L}(L-N,\mathcal{B}_{d}^{\emptyset}) = 1$.
\end{lemma}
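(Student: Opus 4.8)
The plan is to compute $\mathrm{D}_{L}(L-N,\mathcal{B}_{d}^{\emptyset})$ by deforming the Nemytskii operator $N$ onto its average $QN$, whose range lies in $\ker L$, and then reducing the coincidence degree to a finite-dimensional Brouwer degree that the sign condition $\int_{\Omega}\beta_{i}\dd x<0$ in \ref{cond:wi*} pins to $1$.

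First I would consider the homotopy $\hat{N}_{\vartheta}=\vartheta N+(1-\vartheta)QN$, for $\vartheta\in\mathopen{[}0,1\mathclose{]}$, joining $\hat{N}_{1}=N$ to $\hat{N}_{0}=QN$, and look at the coincidence equation $Lp=\hat{N}_{\vartheta}p$. Applying the projector $Q$ and using $QL=0$ (from the Neumann condition) together with $Q^{2}=Q$, any solution satisfies $QNp=0$, so the equation reduces to $Lp=\vartheta Np$. For $\vartheta\in\mathopen{]}0,1\mathclose{]}$ the weak maximum principle confines $p$ to $\mathopen{[}0,1\mathclose{]}^{N}$, making this exactly \eqref{system-S_theta}, so \ref{cond-H3} forbids solutions on $\partial\mathcal{B}_{d}^{\emptyset}$. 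For $\vartheta=0$ one has $Lp=0$, hence $p$ equals a constant vector $c$ with $QN(c)=0$; since $(QN)_{i}(c)=-c_{i}\neq 0$ whenever $c_{i}<0$, no component of $c$ is negative, whence $\hat{c}^{i}\in\mathopen{[}0,1\mathclose{]}^{N-1}$ and $(QN)_{i}(c)=\tfrac{\lambda_{i}f_{i}(c_{i})}{|\Omega|}\int_{\Omega}w_{i}(x,\hat{c}^{i})\dd x$ with $\int_{\Omega}w_{i}(x,\hat{c}^{i})\dd x\leq\int_{\Omega}\beta_{i}\dd x<0$; by \ref{cond:fi*} this forces $f_{i}(c_{i})=0$, i.e.\ $c_{i}\in\{0,1\}$, and since $d_{i}<1$ the only such point of $\overline{\mathcal{B}_{d}^{\emptyset}}$ is $c=0$, which is interior. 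Thus $L-\hat{N}_{\vartheta}$ is admissible on $\partial\mathcal{B}_{d}^{\emptyset}$ throughout $\mathopen{[}0,1\mathclose{]}$ and homotopy invariance gives $\mathrm{D}_{L}(L-N,\mathcal{B}_{d}^{\emptyset})=\mathrm{D}_{L}(L-QN,\mathcal{B}_{d}^{\emptyset})$.

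I would then reduce the latter to a Brouwer degree. Since $(\mathrm{Id}_{Z}-Q)QN=0$, the completely continuous operator attached to $L-QN$ is $\Phi^{0}=P+JQN$, whose range sits in $\ker L\cong\mathbb{R}^{N}$; by the reduction property of the Leray--Schauder degree, and as $(\mathrm{Id}_{X}-\Phi^{0})(c)=-JQN(c)$ for constant $c$, one obtains $\mathrm{D}_{L}(L-QN,\mathcal{B}_{d}^{\emptyset})=\mathrm{deg}_{\mathrm{B}}\bigl{(}-JQN|_{\ker L},\textstyle\prod_{i=1}^{N}\mathopen{]}-d_{i},d_{i}\mathclose{[},0\bigr{)}$. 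The same computation as above shows that $c=0$ is the only zero in the closed box, so by excision this degree equals the local index of $-JQN$ at the origin. As $J$ is the identity and, by continuity, $\int_{\Omega}w_{i}(x,\hat{c}^{i})\dd x<0$ for $\hat{c}^{i}$ in a neighbourhood of $0$, on a sufficiently small box each component $-(QN)_{i}(c)$ has the sign of $c_{i}$ (it equals $c_{i}<0$ for $c_{i}<0$ and is positive for $c_{i}>0$); hence $-JQN|_{\ker L}$ is admissibly homotopic there to the identity and the local index is $+1$. Chaining the equalities yields $\mathrm{D}_{L}(L-N,\mathcal{B}_{d}^{\emptyset})=1$.

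I expect the first step to be the delicate point. The naive scaling homotopy $Lp=\vartheta Np$ is \emph{not} admissible at $\vartheta=0$, where the equation degenerates to $Lp=0$ and every constant is a solution, including those with $\|c_{i}\|_{\infty}=d_{i}$ lying on $\partial\mathcal{B}_{d}^{\emptyset}$; inserting the corrector $(1-\vartheta)QN$ is precisely what removes this degeneracy, by forcing $QNp=0$ along the deformation, which both reinstates \ref{cond-H3} for $\vartheta>0$ and isolates $c=0$ at $\vartheta=0$. The sign condition $\int_{\Omega}\beta_{i}\dd x<0$ is what makes the reduced field agree in sign with the identity and hence fixes the degree at $+1$ rather than at a generic nonzero value; the fact that $w_{i}$ must be evaluated for slightly negative arguments is harmless, as negative components are ruled out by the linear part of the extension and the index is computed near the origin, where \ref{cond:wi*} and continuity apply.
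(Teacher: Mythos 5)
Your proof is correct and follows essentially the same route as the paper: your homotopy $\vartheta N+(1-\vartheta)QN$ is precisely the deformation underlying Mawhin's continuation theorem, which the paper invokes directly after verifying the same two admissibility conditions (no solutions of $Lp=\vartheta Np$ on $\partial\mathcal{B}_{d}^{\emptyset}$ for $\vartheta\in\mathopen{]}0,1\mathclose{]}$ via the maximum principle and \ref{cond-H3}, and nonvanishing of $QN$ on $\partial\mathcal{B}_{d}^{\emptyset}\cap\ker L$), and both arguments conclude by homotoping $-QN|_{\ker L}=-h^{\#}$ to the identity using $\int_{\Omega}\beta_{i}(x)\dd x<0$. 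The only divergence is in that last Brouwer-degree step: the paper runs the convex homotopy on the whole box $\prod_{i}\mathopen{]}-d_{i},d_{i}\mathclose{[}$ via the inequality $\langle h^{\#}(\xi),\xi\rangle<0$, whereas you excise down to the local index at the origin; your version is in fact slightly more careful, since \ref{cond:wi*} controls the sign of $\int_{\Omega}w_{i}(x,\hat{\xi}^{i})\dd x$ only for $\hat{\xi}^{i}\in\mathopen{[}0,1\mathclose{]}^{N-1}$, and your appeal to the continuity of this integral near $\hat{\xi}^{i}=0$ is exactly what is needed to handle boundary points having some negative components.
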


\begin{proof}
Let $d=(d_{1},\ldots,d_{N})\in\mathopen{]}0,1\mathclose{[}^{N}$. As a first step, we show that
\begin{equation*}
L p \neq \vartheta N p, \quad \text{for all $p\in\partial \mathcal{B}_{d}^{\emptyset}\cap\mathrm{dom}\,L$ and $\vartheta\in\mathopen{]}0,1\mathclose{]}$.}
\end{equation*}
Indeed, if $p\in \overline{\mathcal{B}_{d}^{\emptyset}}\cap\mathrm{dom}\,L$ is a solution to $L p = \vartheta N p$ for some $\vartheta\in\mathopen{]}0,1\mathclose{]}$, then $p$ solves
\begin{equation*}
\begin{cases}
\, p_{i}''+\vartheta h_{i}(x,p) = 0, &\text{in $\Omega$,} \\
\, p_{i}'=0, &\text{on $\partial\Omega$,} \\
\, i=1,\ldots,N.
\end{cases}
\end{equation*}
By the weak maximum principle, we find that $0 \leq p_{i}(x)\leq 1$ for all $x\in \overline{\Omega}$ and, indeed, $p$ solves \eqref{system-S_theta}. By condition \ref{cond-H3}, we deduce that $p\notin\partial \mathcal{B}_{d}^{\emptyset}$ and hence the claim is proved.

As a second step, we deal with the case $\vartheta=0$. We consider the operator $QN$ in $\ker L \equiv \mathbb{R}^{N}$, namely
\begin{equation*}
QNp = \dfrac{1}{|\Omega|}\int_{\Omega} h(x,\xi) \, \mathrm{d}x, \quad \text{$p\equiv \xi\in\mathbb{R}^{N}$.}
\end{equation*}
We introduce the map $h^{\#} =(h_{1}^{\#}, \ldots, h_{N}^{\#}) \colon \mathbb{R}^{N} \to \mathbb{R}^{N}$ defined, for $i=1,\ldots,N$, by
\begin{align*}
h_{i}^{\#}(\xi) &= \dfrac{1}{|\Omega|} \int_{\Omega} h_{i}(x,\xi) \,\mathrm{d}x
 =
\begin{cases}
\, -\xi_{i}, & \text{if $\xi_{i} \leq 0$,} \\
\, \lambda_{i} \biggl{(}\dfrac{1}{|\Omega|} \displaystyle \int_{\Omega} w_{i}(x,\hat{\xi}^{i}) \,\mathrm{d}x \biggr{)} f_{i}(\xi_{i}), & \text{if $0 \leq \xi_{i} \leq 1$,} \\
\, 1-\xi_{i}, & \text{if $\xi_{i} \geq 1$.}
\end{cases}
\end{align*}
Hence $QN \xi = h^{\#}(\xi)$ for all $\xi\in\mathbb{R}^{N}$.
By condition \ref{cond:wi*}, for $i=1,\ldots,N$, we have
\begin{equation*}
\int_{\Omega} w_{i}(x,\hat{\xi}) \,\mathrm{d}x \leq \int_{\Omega} \beta_{i}(x) \,\mathrm{d}x < 0, \quad \text{for all $\hat{\xi}\in\mathopen{[}0,1\mathclose{]}^{N-1}$,}
\end{equation*}
and thus
\begin{equation}\label{eq-hs}
\langle h^{\#}(\xi),\xi \rangle <0, \quad \text{for all $\xi\in\mathbb{R}^{N}$ with $\xi_{i}\neq 0$, $\xi_{i}\neq 1$, for every $i=1,\ldots,N$.}
\end{equation}
As a direct consequence, $h^{\#}$ has no zeros on 
\begin{align*}
\partial(\mathcal{B}_{d}^{\emptyset}\cap\mathbb{R}^{N}) &= \bigcup_{i=1}^{N} \Bigl{(} \mathopen{[}-d_{1},d_{1}\mathclose{]} \times \dots \times \mathopen{[}-d_{i-1},d_{i-1}\mathclose{]} \times \{\pm d_{i} \} \times
\\ &\qquad \qquad \times\mathopen{[}-d_{i+1},d_{i+1}\mathclose{]} \times \dots \times \mathopen{[}-d_{N},d_{N}\mathclose{]} \bigr{)},
\end{align*}
and therefore $QNp\neq0$ for all $p\in\partial \mathcal{B}_{d}^{\emptyset}\cap \ker L$. 

An application of Mawhin's continuation theorem (see \cite{Ma-72}) ensures that
\begin{equation*}
\mathrm{D}_{L}(L-N,\mathcal{B}_{d}^{\emptyset}) = \mathrm{deg}_{\mathrm{B}}(-QN,\mathcal{B}_{d}^{\emptyset}\cap\ker L,0) = \mathrm{deg}_{\mathrm{B}}(-h^{\#},\mathcal{B}_{d}^{\emptyset}\cap\mathbb{R}^{N},0),
\end{equation*}
where ``$\textrm{deg}_{\textrm{B}}$'' denotes Brouwer's topological degree.

Finally, from \eqref{eq-hs} we deduce that Brouwer's degree of $-h^{\#}$ in $\mathcal{B}_{d}^{\emptyset}\cap\ker L$ is $1$. This fact can be straightforwardly verified by considering the homotopy given by a convex combination with the identity map.
\end{proof}

\section{Proof of the main result}\label{section-4}

The section presents the proof of Theorem~\ref{th-main}. First, we prove some preliminary lemmas. Then, we can give the proof which is based on the coincidence degree theory illustrated in Section~\ref{section-3}.

\subsection{Preliminary lemmas}\label{section-4.1}

The following three lemmas give some estimates for the solutions of the homotopic parameter dependent systems associated with \eqref{system-S} introduced in Section~\ref{section-3.2}.

\begin{lemma}\label{lem-rho}
For $i=1,\ldots,N$, let $w_{i}\colon\Omega\times\mathbb{R}^{N-1}\to\mathbb{R}$ be an $L^{1}$-Carath\'{e}odory function satisfying \ref{cond:wi*}. For $i=1,\ldots,N$, let $f_{i}\colon\mathopen{[}0,1\mathclose{]}\to\mathopen{[}0,+\infty\mathclose{[}$ be a continuous function satisfying \ref{cond:fi*}.
Let $v = (v_{1},\ldots,v_{N})\in L^{1}(\Omega,\mathbb{R}^{N})$, with $v_{i}\geq0$, for every $i=1,\ldots,N$.
Let $k\in\{1,\ldots,N\}$. For every $\rho\in\mathopen{]}0,1\mathclose{[}$, there exists $\lambda_{k}^{*} = \lambda_{k}^{*}(\rho) > 0$ such that, for every $\lambda_{k} > \lambda_{k}^{*}$, $\lambda_{i}>0$ for $i\neq k$ and $\mu \geq 0$, there is no solution $p=(p_{1},\ldots,p_{N})$ of
\begin{equation}\label{eq-lem-rho}
\begin{cases}
\, p_{i}''+\lambda_{i} w_{i}(x,\hat{p}^{i}) f_{i}(p_{i}) + \mu v_{i}(x) = 0, &\text{in $\Omega$,}\\
\, i=1,\ldots,N,
\end{cases}
\end{equation}
such that $\max_{x\in I_{k}} p_{k}(x) = \rho$.
\end{lemma}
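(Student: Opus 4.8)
The plan is to argue by contradiction, localising everything onto the $k$-th equation over the interval $I_{k}$, where the weight is sign-definite. Write $I_{k}=[a,b]$ (necessarily $a<b$, since $\alpha_{k}\not\equiv0$ on $I_{k}$ forces positive measure). Suppose that, for some $\lambda_{k}$ as large as we like, some $\lambda_{i}>0$ ($i\neq k$), some $\mu\geq0$ and some solution $p$, one has $\max_{x\in I_{k}}p_{k}(x)=\rho$. Since every admissible solution takes values in $[0,1]^{N}$, hypothesis \ref{cond:wi*} gives $w_{k}(x,\hat{p}^{k}(x))\geq\alpha_{k}(x)\geq0$ on $I_{k}$; combined with $f_{k}\geq0$, $v_{k}\geq0$ and $\mu\geq0$, the $k$-th equation yields $p_{k}''\leq0$ a.e.\ on $I_{k}$, i.e.\ $p_{k}$ is concave on $I_{k}$.

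The two key geometric facts — both independent of $\lambda_{k}$, of $\mu$ and of the particular solution — follow from concavity together with $0\leq p_{k}\leq1$ and $\max_{I_{k}}p_{k}=\rho$. First, a \emph{tent} lower bound: comparing the concave graph with the chords joining a maximum point to the endpoints, and using $p_{k}\geq0$, gives $p_{k}(x)\geq\rho\,\operatorname{dist}(x,\{a,b\})/(b-a)$ on $I_{k}$. Second, the tangent-line inequalities $p_{k}(a)\leq p_{k}(x)+(a-x)p_{k}'(x)$ and $p_{k}(b)\leq p_{k}(x)+(b-x)p_{k}'(x)$, with $0\leq p_{k}\leq1$, bound the derivative at interior points: $|p_{k}'(x)|\leq\max\{(x-a)^{-1},(b-x)^{-1}\}$.

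I would then fix, once and for all, a small $\epsilon>0$ with $\int_{a+\epsilon}^{b-\epsilon}\alpha_{k}(x)\,\dd x=:A>0$, which is possible by absolute continuity of the integral, since $\alpha_{k}\geq0$, $\alpha_{k}\not\equiv0$ on $I_{k}$ give $\int_{I_{k}}\alpha_{k}>0$. Set $\sigma=a+\epsilon$, $\tau=b-\epsilon$ and $\eta=\epsilon/(b-a)$. On $[\sigma,\tau]$ the tent bound gives $\rho\eta\leq p_{k}\leq\rho$, hence $f_{k}(p_{k})\geq m:=\min_{[\rho\eta,\rho]}f_{k}>0$ (here $[\rho\eta,\rho]$ is a compact subset of the open unit interval, where $f_{k}>0$), while the derivative bound gives $|p_{k}'|\leq C:=1/\epsilon$; note that $A$, $m$, $C$ depend only on $\rho$ and the fixed data. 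Integrating the $k$-th equation on $[\sigma,\tau]$,
\[
p_{k}'(\tau)-p_{k}'(\sigma)=-\int_{\sigma}^{\tau}\!\big(\lambda_{k}w_{k}(x,\hat{p}^{k})f_{k}(p_{k})+\mu v_{k}\big)\,\dd x\leq-\lambda_{k}\int_{\sigma}^{\tau}\!\alpha_{k}f_{k}(p_{k})\,\dd x\leq-\lambda_{k}mA,
\]
whereas the left-hand side is $\geq-2C$. Hence $\lambda_{k}\leq 2C/(mA)=:\lambda_{k}^{*}(\rho)$, contradicting $\lambda_{k}>\lambda_{k}^{*}(\rho)$, and the claim follows.

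The delicate point — the main obstacle — is the decoupling between the location of the peak and the support of the positive part of the weight: the maximum of $p_{k}$ may be attained on a region where $\alpha_{k}\equiv0$, so one cannot bound $f_{k}(p_{k})$ from below directly where the weight is active. The tent lower bound resolves exactly this, providing a uniform positive minorant of $p_{k}$ on the whole interior of $I_{k}$ regardless of where the peak sits, and in particular on $[\sigma,\tau]$, where the positive mass $A$ of $\alpha_{k}$ is concentrated. The second essential ingredient is that the derivative bound is independent of $\lambda_{k}$, which keeps the left-hand side above of order one while the right-hand side grows linearly in $\lambda_{k}$. The remaining points are routine: $p_{k}'$ is absolutely continuous (so the integration is legitimate), and $[\sigma,\tau]$ lies in the interior of $I_{k}$ even when $I_{k}$ abuts $\partial\Omega$.
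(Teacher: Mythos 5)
Your proposal is correct and follows essentially the same route as the paper's proof: contradiction, concavity of $p_{k}$ on $I_{k}$ from the sign conditions, the ``tent'' lower bound pushing $p_{k}$ above a positive threshold on the shrunk interval $\mathopen{[}a+\epsilon,b-\epsilon\mathclose{]}$ where $\alpha_{k}$ has positive mass, a $\lambda_{k}$-independent bound on $\lvert p_{k}'\rvert$ there, and integration of the $k$-th equation to force $\lambda_{k}\leq 2C/(mA)$. The only (immaterial) difference is that you derive the derivative bound from the tangent-line inequality with $0\leq p_{k}\leq 1$, giving $\lvert p_{k}'\rvert\leq 1/\epsilon$, whereas the paper uses monotonicity of $p_{k}'$ to get the slightly sharper $\lvert p_{k}'\rvert\leq\rho/\epsilon$.
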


\begin{proof}
For simplicity, for $i=1,\ldots,N$, we set $I_{i}=\mathopen{[}\sigma_{i},\tau_{i}\mathclose{]}$.
Let $k\in\{1,\ldots,N\}$ be fixed. Let $\varepsilon > 0$ with $\varepsilon < (\tau_{k}- \sigma_{k})/2$, and satisfying $\int_{\sigma_{k}+\varepsilon}^{\tau_{k}-\varepsilon} \alpha_{k}(x) \, \mathrm{d}x > 0$.

Let $\rho\in\mathopen{]}0,1\mathclose{[}$. We fix $\mu \geq 0$ and suppose by contradiction that $p$ is a solution of \eqref{eq-lem-rho} such that $\max_{x\in I_{k}} p_{k}(x) = \rho$. Hypothesis $(w_{k,*})$ ensures that $p_{k}$ is concave on $I_{k}$. As a consequence, we have
\begin{equation*}
p_{k}(x) \geq \dfrac{\rho}{\tau_{k}-\sigma_{k}}\min\bigl{\{}x-\sigma_{k},\tau_{k}-x\bigr{\}}\geq \dfrac{\varepsilon \rho}{|I_{k}|}, \quad \text{for all $x\in I_{k}$,}
\end{equation*}
(cf.~\cite[p.~420]{GaHaZa-03cpaa} for a similar estimate).
Moreover, since $p_{k}'$ is non-decreasing in $I_{k}$, by integrating we have that
\begin{equation*}
|p_{k}'(x)| \leq \dfrac{p_{k}(x)}{\varepsilon} \leq \dfrac{\rho}{\varepsilon}, \quad \text{for all $x\in \mathopen{[}\sigma_{k}+\varepsilon,\tau_{k}-\varepsilon\mathclose{]}$.}
\end{equation*}

We define
\begin{equation*}
\eta_{\varepsilon,\rho} = \min \Biggl{\{} f_{k}(s) \colon \dfrac{\varepsilon \rho}{|I_{k}|} \leq s \leq \rho \Biggr{\}}.
\end{equation*}
We integrate the $k$-th equation in \eqref{eq-lem-rho} on $\mathopen{[}\sigma_{k}+\varepsilon,\tau_{k}-\varepsilon\mathclose{]}$ and we use the above estimates to obtain
\begin{equation*}
\begin{aligned}
\lambda_{k} \eta_{\varepsilon,\rho} \int_{\sigma_{k}+\varepsilon}^{\tau_{k}-\varepsilon} \alpha_{k}(x) \,\mathrm{d}x
&\leq \lambda_{k} \int_{\sigma_{k}+\varepsilon}^{\tau_{k}-\varepsilon} w_{i}(x,\hat{p}^{k}(x))f_{k}(p_{k}(x)) \,\mathrm{d}x
\\ &= \int_{\sigma_{k}+\varepsilon}^{\tau_{k}-\varepsilon} (- p_{k}''(x) - \mu v_{k}(x)) \,\mathrm{d}x
\leq \int_{\sigma_{k}+\varepsilon}^{\tau_{k}-\varepsilon} - p_{k}''(x) \,\mathrm{d}x
\\ &= p_{k}'(\sigma_{k}+\varepsilon) - p_{k}'(\tau_{k}-\varepsilon) \leq \dfrac{2\rho}{\varepsilon}.
\end{aligned}
\end{equation*}
Finally, setting
\begin{equation*}
\lambda_{k}^{*} = \lambda_{k}^{*}(\rho) = \dfrac{2 \rho}{\varepsilon \eta_{\varepsilon,\rho} \int_{\sigma_{k}+\varepsilon}^{\tau_{k}-\varepsilon} \alpha_{k}(x) \,\mathrm{d}x}
\end{equation*}
and taking $\lambda_{k} > \lambda_{k}^{*}$, we conclude that there is no solution $p$ of \eqref{eq-lem-rho} with $\max_{x\in I_{k}} p_{k}(x) = \rho$.
\end{proof}

\begin{lemma}\label{lem-r}
For $i=1,\ldots,N$, let $w_{i}\colon\Omega\times\mathbb{R}^{N-1}\to\mathbb{R}$ be an $L^{1}$-Carath\'{e}odory function satisfying \ref{cond:wi*}. For $i=1,\ldots,N$, let $f_{i}\colon\mathopen{[}0,1\mathclose{]}\to\mathopen{[}0,+\infty\mathclose{[}$ be a continuously differentiable function satisfying \ref{cond:fi*} and \ref{cond:fi0}. 
Let $\lambda_{i}>0$, for $i=1,\ldots,N$. For every $k\in\{1,\ldots,N\}$ there exists $r_{k}\in\mathopen{]}0,1\mathclose{[}$ such that for all $\vartheta\in\mathopen{]}0,1\mathclose{]}$ every solution $p=(p_{1},\ldots,p_{N})$ of \eqref{system-S_theta} with $\|p_{k}\|_{\infty}\leq r_{k}$ satisfies $p_{k}(x)=0$ for all $x\in\overline{\Omega}$.
\end{lemma}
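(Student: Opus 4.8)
The estimate concerns only the $k$-th component, so I fix $k\in\{1,\dots,N\}$ and work with $p_k$, recalling that any solution of \eqref{system-S_theta} satisfies $0\le p_i\le 1$ on $\overline{\Omega}$. The whole argument rests on the superlinearity \ref{cond:fi0}: since $f_k(0)=0$ and $f_k'(0)=0$, the quantities
\[
\epsilon(t):=\sup_{0<s\le t}\frac{f_k(s)}{s}, \qquad L(t):=\max_{0\le s\le t}\lvert f_k'(s)\rvert
\]
both tend to $0$ as $t\to 0^{+}$. I also set $\gamma_k:=\max\{\lvert\alpha_k\rvert,\lvert\beta_k\rvert\}\in L^{1}(\Omega)$, so that $\lvert w_k(x,\hat\xi)\rvert\le\gamma_k(x)$ by \ref{cond:wi*}, and I put $B_{0}:=-\int_{\Omega}\beta_k\,\mathrm{d}x>0$, $c_{0}:=B_{0}/\lVert\gamma_k\rVert_{L^{1}}>0$ and $C:=\lvert\Omega\rvert\,\lambda_k\lVert\gamma_k\rVert_{L^{1}}$. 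The plan is to fix $r_k\in\mathopen{]}0,1\mathclose{[}$ so small that $C\epsilon(r_k)<1/2$ and $C\,L(r_k)<c_{0}/(1+c_{0})$, and then to argue by contradiction: I assume $p$ solves \eqref{system-S_theta} for some $\vartheta\in\mathopen{]}0,1\mathclose{]}$ with $M:=\lVert p_k\rVert_{\infty}\le r_k$ and $M>0$, and derive a contradiction, which forces $p_k\equiv 0$.

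First I would record two a priori facts. For positivity, note $0\le f_k(p_k)\le\epsilon(M)p_k\le\epsilon(M)M$; integrating the $k$-th equation of \eqref{system-S_theta} from $\underline\omega$ and using the Neumann condition and $\vartheta\le 1$ gives $\lVert p_k'\rVert_{\infty}\le\lambda_k\lVert\gamma_k\rVert_{L^{1}}F_{\max}$, where $F_{\max}:=\max_{\overline\Omega}f_k(p_k)$, and hence $\mathrm{osc}(p_k)\le\lvert\Omega\rvert\lVert p_k'\rVert_{\infty}\le C\epsilon(M)M<M/2$. Since $p_k\ge 0$ attains the value $M$, this yields $p_k(x)\ge M/2>0$ on $\overline\Omega$, so that $F_{\min}:=\min_{\overline\Omega}f_k(p_k)>0$ by \ref{cond:fi*}. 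Next, integrating the $k$-th equation over $\Omega$ kills $\int_{\Omega}p_k''$ by the Neumann condition, so $\int_{\Omega}w_k(x,\hat p^{k})f_k(p_k)\,\mathrm{d}x=0$; writing $f_k(p_k)=F_{\min}+(f_k(p_k)-F_{\min})$ with $0\le f_k(p_k)-F_{\min}\le F_{\max}-F_{\min}$ and using $\int_{\Omega}w_k\le\int_{\Omega}\beta_k=-B_{0}$ together with $\lvert w_k\rvert\le\gamma_k$, I obtain the lower bound
\[
F_{\min}B_{0}\le\lVert\gamma_k\rVert_{L^{1}}(F_{\max}-F_{\min}),\qquad\text{i.e.}\qquad \frac{F_{\max}}{F_{\min}}\ge 1+c_{0}.
\]

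Finally I would bound the same ratio from above. As $f_k\circ p_k$ is $C^{1}$ with derivative $f_k'(p_k)p_k'$ and $p_k$ takes values in $[0,M]$, one has $F_{\max}-F_{\min}=\mathrm{osc}(f_k(p_k))\le\int_{\Omega}\lvert f_k'(p_k)\rvert\,\lvert p_k'\rvert\,\mathrm{d}x\le L(M)\lVert p_k'\rVert_{L^{1}}\le C\,L(M)\,F_{\max}$, reusing $\lVert p_k'\rVert_{L^{1}}\le\lvert\Omega\rvert\lVert p_k'\rVert_{\infty}$ and the bound on $\lVert p_k'\rVert_{\infty}$ above. Consequently $F_{\max}/F_{\min}\le(1-C\,L(M))^{-1}<1+c_{0}$ by the choice of $r_k$, contradicting the previous paragraph; hence $M=0$ and $p_k\equiv 0$. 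The conceptual crux, and the step I would be most careful about, is precisely this clash between a \emph{lower} bound on the relative oscillation $F_{\max}/F_{\min}$ of $f_k(p_k)$, forced by the sign-changing weight through the constraint $\int_{\Omega}w_kf_k(p_k)=0$ with $\int_{\Omega}w_k<0$, and an \emph{upper} bound coming purely from the $C^{1}$-regularity of $f_k$. It is decisive that in the latter estimate the amplitude $M$ cancels, so that no rate of vanishing of $f_k$ at $0$ beyond $f_k'(0)=0$ is needed; the superlinearity enters only through $\epsilon(r_k),L(r_k)\to 0$ when fixing $r_k$, and since everything depends on $\vartheta$ solely through $\vartheta\lambda_k\in\mathopen{]}0,\lambda_k\mathclose{]}$, the chosen $r_k$ works uniformly for all $\vartheta\in\mathopen{]}0,1\mathclose{]}$.
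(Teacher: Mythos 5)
Your proof is correct, and it takes a genuinely different route from the one in the paper. The paper argues by contradiction on a sequence of solutions with $\|p_{k}^{n}\|_{\infty}\to 0$ and introduces the Riccati-type variable $z_{k}^{n}=(p_{k}^{n})'/(\vartheta_{n} f_{k}(p_{k}^{n}))$, which satisfies $(z_{k}^{n})'=-\lambda_{k}w_{k}-\vartheta_{n}f_{k}'(p_{k}^{n})(z_{k}^{n})^{2}$ with zero boundary values; a bootstrap bound $\|z_{k}^{n}\|_{\infty}\leq M$ and an integration over $\Omega$ then clash with $\int_{\Omega}\beta_{k}\,\mathrm{d}x<0$ because $f_{k}'(p_{k}^{n})\to 0$ uniformly. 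You instead work directly with the extrema $F_{\max}$, $F_{\min}$ of $f_{k}(p_{k})$: the identity $\int_{\Omega}w_{k}f_{k}(p_{k})\,\mathrm{d}x=0$, obtained by integrating the equation with no change of variable, combined with $\int_{\Omega}w_{k}\,\mathrm{d}x\leq\int_{\Omega}\beta_{k}\,\mathrm{d}x<0$, forces the relative oscillation $F_{\max}/F_{\min}\geq 1+c_{0}$, while the gradient bound and the smallness of $f_{k}'$ on $\mathopen{[}0,r_{k}\mathclose{]}$ cap that ratio strictly below $1+c_{0}$. Both arguments hinge on exactly the same two hypotheses, namely \ref{cond:fi0} and the negative mean of $\beta_{k}$ in \ref{cond:wi*}, but yours avoids the Riccati substitution altogether --- and with it the need to first establish $p_{k}>0$ on $\overline{\Omega}$ via uniqueness for the Cauchy problem, since your oscillation estimate $\mathrm{osc}(p_{k})<M/2$ delivers positivity for free --- and it yields an explicit admissible $r_{k}$ rather than one extracted from a sequential compactness argument. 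The decisive step, the chain $F_{\max}-F_{\min}\leq L(M)\|p_{k}'\|_{L^{1}}\leq C\,L(M)\,F_{\max}$ in which the amplitude $M$ cancels, is sound because $f_{k}\circ p_{k}$ is $\mathcal{C}^{1}$ with derivative $f_{k}'(p_{k})p_{k}'$, and the whole construction depends on $\vartheta$ only through $\vartheta\lambda_{k}\leq\lambda_{k}$, so $r_{k}$ is indeed uniform in $\vartheta\in\mathopen{]}0,1\mathclose{]}$ as required.
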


\begin{proof}
Let $k\in\{1,\ldots,N\}$ be fixed.
By contradiction, we assume that there exists a sequence $(p^{n})_{n}$ of solutions of \eqref{system-S_theta} for $\vartheta=\vartheta_{n}\in\mathopen{]}0,1\mathclose{]}$, with $p^{n}=(p_{1}^{n},\ldots,p_{N}^{n})$ such that $0<\|p_{k}^{n}\|_{\infty}=r^{n}\to 0$ as $n\to\infty$.
Setting
\begin{equation}\label{eq-zeta-n}
z_{i}^{n}(x) = \dfrac{(p_{i}^{n})'(x)}{\vartheta_{n} f_{i}(p_{i}^{n}(x))}, \quad \text{$x\in\Omega$,} \quad \text{for $i=1,\ldots,N$,}
\end{equation}
we deduce that
\begin{equation}\label{eq-zn}
\begin{aligned}
(z_{i}^{n})'(x) &= \dfrac{(p_{i}^{n})'' (x) \vartheta_{n} f_{i}(p_{i}^{n}(x)) - \vartheta_{n} f_{i}'(p_{i}^{n}(x)) ((p_{i}^{n})'(x))^{2}}{(\vartheta_{n})^{2}f_{i}(p_{i}^{n}(x))^{2}} 
\\ &= - \lambda_{i}w_{i}(x,\widehat{p^{n}}^{i}(x)) - \vartheta_{n} f_{i}'(p_{i}^{n}(x))(z_{i}^{n}(x))^{2},
\end{aligned}
\end{equation}
for almost every $x\in\Omega$, and
\begin{equation*}
z_{i}^{n}(\underline{\omega}) = z_{i}^{n}(\overline{\omega})=0,
\end{equation*}
since $p_{i}^{n}$ satisfies the Neumann boundary conditions on $\partial\Omega$.

Let $\gamma_{k}=\lambda_{k}\max\{|\alpha_{k}|,|\beta_{k}|\}$. We fix $M>\|\gamma_{k}\|_{L^{1}}$ and then $\delta\in\mathbb{R}$ with
\begin{equation}\label{eq-delta}
0 < \delta < \dfrac{M - \|\gamma_{k}\|_{L^{1}}}{|\Omega| M^{2}}.
\end{equation}
By condition $(f_{k,0})$ we can fix $\mu_{k}\in \mathopen{]}0,1\mathclose{[}$ such that
\begin{equation*}
|f_{k}'(s)|\leq \delta, \quad \text{for all $s\in\mathopen{[}0, \mu_{k}\mathclose{]}$.}
\end{equation*}
We notice that $0 < p_{k}^{n}(x) < \mu_{k}$ on $\overline{\Omega}$, for $n$ sufficiently large.
We claim that
\begin{equation}\label{eqz-ineq}
\|z_{k}^{n}\|_{\infty} \leq M.
\end{equation}
Indeed, if by contradiction we suppose that this is not true, then, since $z_{k}^{n}(\underline{\omega})=0$, we consider the maximal interval of the form $\mathopen{[}\underline{\omega},\nu_{n}\mathclose{]}$ such that $|z_{k}^{n}(x)| \leq M$ for all $x\in \mathopen{[}\underline{\omega},\nu_{n}\mathclose{]}$ and $|z_{k}^{n}(x)| > M$ for some $x\in \mathopen{]}\nu_{n},\overline{\omega}\mathclose{[}$.
By the maximality of the interval and the continuity of $z_{k}^{n}$, we also know that $|z_{k}^{n}(\nu_{n})| = M$.
Integrating \eqref{eq-zn} on $\mathopen{[}\underline{\omega},\nu_{n}\mathclose{]}$ and passing to the absolute value, we obtain
\begin{equation*}
\begin{aligned}
M &= |z_{k}^{n}(\nu_{n})| = |z_{k}^{n}(\nu_{n}) - z_{k}^{n}(\underline{\omega})| 
\leq \Bigl{|} \int_{\underline{\omega}}^{\nu_{n}} f_{k}'(p_{k}^{n}(x)) (z_{k}^{n}(x))^{2} \,\mathrm{d}x \Bigr{|} + \vartheta_{n}\|\gamma_{k}\|_{L^{1}}\\
& \leq \delta M^{2} |\nu_{n} - \underline{\omega}| + \|\gamma_{k}\|_{L^{1}}
\leq \delta M^{2} |\Omega| + \|\gamma_{k}\|_{L^{1}},
\end{aligned}
\end{equation*}
a contradiction with the choice of $\delta$ in \eqref{eq-delta}. In this manner, we have verified that \eqref{eqz-ineq} is true.

Now, integrating \eqref{eq-zn} on $\Omega$ and using \eqref{eqz-ineq}, we obtain that
\begin{equation*}
\begin{aligned}
0< - \lambda_{k} \int_{\Omega} \beta_{k}(x)\,\mathrm{d}x &\leq - \lambda_{k} \int_{\Omega} w_{k}(x,\widehat{p^{n}}^{k}(x))\,\mathrm{d}x \\
&= \vartheta_{n} \int_{\Omega} f_{k}'(p_{k}^{n}(x)) (z_{k}^{n}(x))^{2} \,\mathrm{d}x\\
&\leq M^{2}|\Omega| \max_{0\leq s \leq r^{n}}|f_{k}'(s)|
\end{aligned}
\end{equation*}
holds for every $n$ sufficiently large. Using the continuity of $f_{k}'$ at $0^{+}$, we get a contradiction, as $n\to \infty$.
\end{proof}

\begin{lemma}\label{lem-R}
For $i=1,\ldots,N$, let $w_{i}\colon\Omega\times\mathbb{R}^{N-1}\to\mathbb{R}$ be an $L^{1}$-Carath\'{e}odory function satisfying \ref{cond:wi*}. For $i=1,\ldots,N$, let $f_{i}\colon\mathopen{[}0,1\mathclose{]}\to\mathopen{[}0,+\infty\mathclose{[}$ be a continuously differentiable function satisfying \ref{cond:fi*}. 
Let $\lambda_{i}>0$, for $i=1,\ldots,N$. For every $k\in\{1,\ldots,N\}$ there exists $R_{k}\in\mathopen{]}0,1\mathclose{[}$ such that for all $\vartheta\in\mathopen{]}0,1\mathclose{]}$ every solution $p=(p_{1},\ldots,p_{N})$ of \eqref{system-S_theta} satisfies $\|p_{k}\|_{\infty}<R_{k}$.
\end{lemma}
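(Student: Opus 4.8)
The plan is to argue by contradiction, combining a compactness a priori argument with the integral identity already used in Lemma~\ref{lem-r}. Suppose no such $R_{k}\in\mathopen{]}0,1\mathclose{[}$ exists. Then there are $\vartheta_{n}\in\mathopen{]}0,1\mathclose{]}$ and solutions $p^{n}=(p_{1}^{n},\ldots,p_{N}^{n})$ of \eqref{system-S_theta} (with $\vartheta=\vartheta_{n}$) such that $\|p_{k}^{n}\|_{\infty}\to 1$; since the constant $p_{k}\equiv 1$ is the trivial solution (for which the bound is vacuous), I restrict to the meaningful case $p_{k}^{n}\not\equiv 1$, and, discarding finitely many terms, $p_{k}^{n}\not\equiv 0$. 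First I would derive uniform a priori bounds: the weak maximum principle gives $0\le p_{i}^{n}\le 1$, and since by \ref{cond:wi*} the product $w_{i}(x,\widehat{p^{n}}^{i})f_{i}(p_{i}^{n})$ is dominated in $L^{1}(\Omega)$ by $\max\{|\alpha_{i}|,|\beta_{i}|\}\max_{\mathopen{[}0,1\mathclose{]}}f_{i}$, the functions $(p_{i}^{n})''$ are bounded in $L^{1}$; together with the Neumann conditions this bounds $(p_{i}^{n})'$ uniformly and makes the family equicontinuous. Hence, up to a subsequence, $p^{n}\to p^{*}$ in $\mathcal{C}^{1}(\overline{\Omega},\mathbb{R}^{N})$ and $\vartheta_{n}\to\vartheta^{*}\in\mathopen{[}0,1\mathclose{]}$, and passing to the limit in the integral formulation (via the Carath\'{e}odory property of the $w_{i}$ and dominated convergence) shows that $p^{*}$ solves \eqref{system-S_theta} with $\vartheta=\vartheta^{*}$ and $\|p_{k}^{*}\|_{\infty}=1$.

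The crucial reduction is that $p_{k}^{*}\equiv 1$. Indeed $p_{k}^{*}$ attains the value $1$ at some $x_{0}\in\overline{\Omega}$, which is a maximum, so $(p_{k}^{*})'(x_{0})=0$ (at an interior point, or at a boundary point thanks to the Neumann condition). Reading the $k$-th equation as a scalar Carath\'{e}odory ODE with coefficient $x\mapsto w_{k}(x,\widehat{p^{*}}^{k}(x))\in L^{1}(\Omega)$ and right-hand side locally Lipschitz in $p_{k}$ (because $f_{k}\in\mathcal{C}^{1}$), uniqueness of the associated Cauchy problem, together with the fact that $p_{k}\equiv 1$ solves it, forces $p_{k}^{*}\equiv 1$. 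Consequently $p_{k}^{n}\to 1$ \emph{uniformly} on $\overline{\Omega}$; in particular $0<p_{k}^{n}<1$ on $\overline{\Omega}$ for $n$ large.

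Next I would reproduce the computation of Lemma~\ref{lem-r}. Setting $z_{k}^{n}=(p_{k}^{n})'/(\vartheta_{n}f_{k}(p_{k}^{n}))$, which is well defined and continuous with $z_{k}^{n}(\underline{\omega})=z_{k}^{n}(\overline{\omega})=0$, one gets $(z_{k}^{n})'=-\lambda_{k}w_{k}(x,\widehat{p^{n}}^{k})-\vartheta_{n}f_{k}'(p_{k}^{n})(z_{k}^{n})^{2}$. Integrating over $\Omega$ and using \ref{cond:wi*} yields
\begin{equation*}
\vartheta_{n}\int_{\Omega}f_{k}'(p_{k}^{n})(z_{k}^{n})^{2}\,\mathrm{d}x = -\lambda_{k}\int_{\Omega}w_{k}(x,\widehat{p^{n}}^{k})\,\mathrm{d}x \geq -\lambda_{k}\int_{\Omega}\beta_{k}(x)\,\mathrm{d}x =: c_{0}>0 .
\end{equation*}
Since $f_{k}(s)>0$ on $\mathopen{]}0,1\mathclose{[}$ and $f_{k}(1)=0$, we have $f_{k}'(1)\le 0$; writing $\omega_{n}=\max_{x}|f_{k}'(p_{k}^{n}(x))-f_{k}'(1)|\to 0$ (by uniform convergence and continuity of $f_{k}'$) and using $(z_{k}^{n})^{2}\ge 0$, I would estimate $\vartheta_{n}\int_{\Omega}f_{k}'(p_{k}^{n})(z_{k}^{n})^{2}\,\mathrm{d}x\le \vartheta_{n}(f_{k}'(1)+\omega_{n})\int_{\Omega}(z_{k}^{n})^{2}\,\mathrm{d}x$, to be contrasted with the lower bound $c_{0}$.

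The main obstacle is precisely this last step, and it is where the absence of a superlinearity hypothesis at $1$ (contrary to \ref{cond:fi0} at $0$) is felt. If $f_{k}'(1)<0$, then for $n$ large $f_{k}'(1)+\omega_{n}<0$, so the right-hand side above is $\le 0<c_{0}$ and the contradiction follows with no control on $\int_{\Omega}(z_{k}^{n})^{2}\,\mathrm{d}x$. If instead $f_{k}'(1)=0$ (the degenerate case), then $f_{k}'$ is small near $1$, and I would mimic the maximal-interval argument of Lemma~\ref{lem-r} to obtain a uniform bound $\|z_{k}^{n}\|_{\infty}\le M$; then $\vartheta_{n}(f_{k}'(1)+\omega_{n})\int_{\Omega}(z_{k}^{n})^{2}\,\mathrm{d}x\le \omega_{n}M^{2}|\Omega|\to 0<c_{0}$, again a contradiction. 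In both cases the desired $R_{k}\in\mathopen{]}0,1\mathclose{[}$ exists, completing the proof.
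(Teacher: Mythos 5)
Your proof is correct, and its core coincides with the paper's: the substitution $z_{k}^{n}=(p_{k}^{n})'/(\vartheta_{n}f_{k}(p_{k}^{n}))$, the integral identity $\vartheta_{n}\int_{\Omega}f_{k}'(p_{k}^{n}(x))(z_{k}^{n}(x))^{2}\,\mathrm{d}x=-\lambda_{k}\int_{\Omega}w_{k}(x,\widehat{p^{n}}^{k}(x))\,\mathrm{d}x\geq-\lambda_{k}\int_{\Omega}\beta_{k}(x)\,\mathrm{d}x>0$, and the dichotomy $f_{k}'(1)<0$ versus $f_{k}'(1)=0$, the latter handled by transplanting the maximal-interval bound of Lemma~\ref{lem-r} to a left neighbourhood of $1$. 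Where you genuinely diverge is in establishing the uniform convergence $p_{k}^{n}\to1$: you use a soft compactness argument (uniform $L^{1}$ domination of $(p_{i}^{n})''$, Arzel\`{a}--Ascoli, passage to the limit in the integral formulation, and identification of the limit component as $\equiv1$ via uniqueness of the Cauchy problem), whereas the paper proves the quantitative pointwise bound $(1-p_{k}^{n}(x))^{2}+((p_{k}^{n})'(x))^{2}\leq(1-R^{n})^{2}e^{|\Omega|+C\|\gamma_{k}\|_{L^{1}}}$ by integrating the logarithmic derivative of the left-hand quantity, using $f_{k}(s)\leq C(1-s)$. The paper's route is Gronwall-type, subsequence-free and gives an explicit rate; yours is more standard but requires extracting subsequences and, strictly speaking, should address the degenerate limit $\vartheta^{*}=0$ (where the limit equation is $(p_{k}^{*})''=0$ with Neumann conditions, so $p_{k}^{*}$ is constant and the identification $p_{k}^{*}\equiv1$ is immediate) --- a harmless omission. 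Your explicit exclusion of the component $p_{k}\equiv1$ matches the paper's implicit convention $R^{n}\to1^{-}$, and is indeed consistent with how the lemma is used on boundaries of balls of radius strictly less than $1$.
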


\begin{proof}
Let $k\in\{1,\ldots,N\}$ be fixed.
By contradiction, we assume that there exists a sequence $(p^{n})_{n}$ of solutions of \eqref{system-S_theta} for $\vartheta=\vartheta_{n}\in\mathopen{]}0,1\mathclose{]}$, with $p^{n}=(p_{1}^{n},\ldots,p_{N}^{n})$ such that $0<\|p_{k}^{n}\|_{\infty}=R^{n}\to 1^{-}$ as $n\to\infty$.

As a first step, we claim that $p_{k}^{n}\to1$ uniformly as $n\to\infty$. 
By the uniqueness of the solution of the Cauchy problem associated with the $k$-th equation in \eqref{system-S_theta}, since $p_{k}^{n}\not\equiv1$, we notice that $(1-p_{k}^{n}(x))^{2}+((p_{k}^{n})'(x))^{2}>0$ for all $x\in\overline{\Omega}$.
Since $f_{k}\in\mathcal{C}^{1}(\mathopen{[}0,1\mathclose{]})$, we can fix a constant $C>0$ such that $f_{k}(s)\leq C(1-s)$ for every $s\in\mathopen{[}0,1\mathclose{]}$. Let $\gamma_{k}=\lambda_{k} \max\{|\alpha_{k}|,|\beta_{k}|\}$. 
Therefore, we have
\begin{align*}
&\biggl{|}\frac{\mathrm{d}}{\mathrm{d}x} \log{\bigl{(}(1-p_{k}^{n}(x))^{2}+((p_{k}^{n})'(x))^{2}\bigr{)}}\biggr{|} =\\
&=\biggl{|}-2\;\frac{(1-p_{k}^{n}(x))(p_{k}^{n})'(x)+\vartheta_{n} \lambda_{k} w_{k}(x,\widehat{p^{n}}^{k}(x)) (p_{k}^{n})'(x)f_{k}(p_{k}^{n}(x))}{(1-p_{k}^{n}(x))^{2}+((p_{k}^{n})'(x))^{2}}\biggr{|}\\
&\leq2\frac{(1-p_{k}^{n}(x))|(p_{k}^{n})'(x)|+\gamma_{k}(x) |(p_{k}^{n})'(x)| f_{k}(p_{k}^{n}(x))}{(1-p_{k}^{n}(x))^{2}+((p_{k}^{n})'(x))^{2}}\\
&\leq (1+C \gamma_{k}(x))\frac{2(1-p_{k}^{n}(x))|(p_{k}^{n})'(x)|}{(1-p_{k}^{n}(x))^{2}+((p_{k}^{n})'(x))^{2}}\\
&\leq 1+C \gamma_{k}(x), \quad \text{for all $x\in\Omega$}.
\end{align*}
Let $\bar{x}_{n}\in\Omega$ be such that $p_{k}^{n}(\bar{x}_{n})=R^{n}$.
Hence, by an integration of the above inequality from $\bar{x}_{n}$ to an arbitrary $x\in \Omega$, we have 
\begin{equation*}
\log{\frac{(1-p_{k}^{n}(x))^{2} +((p_{k}^{n})'(x))^{2}}{(1-R^{n})^{2} }}\leq |\Omega| + C\|\gamma_{k}\|_{L^{1}}
\end{equation*}
and so we deduce
\begin{equation*}
(1-p_{k}^{n}(x))^{2} +((p_{k}^{n})'(x))^{2}\leq (1-R^{n})^{2}e^{|\Omega|+C\|\gamma_{k}\|_{L^{1}}},
\end{equation*}
for all $x\in\overline{\Omega}$. The claim is thus proved since $R^{n}\to1^{-}$ as $n\to\infty$.

As in Lemma~\ref{lem-r} we perform the change of variable \eqref{eq-zeta-n} and we integrate \eqref{eq-zn} on $\Omega$, obtaining
\begin{equation}\label{eq-cc}
0 < -\lambda_{k} \int_{\Omega} \beta_{k}(x) \,\mathrm{d}x \leq \vartheta_{n} \int_{\Omega} f_{k}'(p_{k}^{n}(x))(z_{k}^{n}(x))^{2}\,\mathrm{d}x.
\end{equation}
If $f_{k}'(1)<0$, then $f_{k}'(s)<0$ for every $s$ in a left neighbourhood of $1$ and a contradiction follows from \eqref{eq-cc} since $p_{k}^{n}\to1$ uniformly as $n\to\infty$. On the other hand, if $f_{k}'(1)=0$, by arguing as in Lemma~\ref{lem-r}, the sequence $(z_{k}^{n})_{n}$ is uniformly bounded, $f_{k}'(p_{k}^{n})\to 0$ uniformly as $n\to\infty$, and a contradiction is reached from \eqref{eq-cc}.
\end{proof}

\subsection{Proof of Theorem~\ref{th-main}}\label{section-4.2}
Let $\rho\in\mathopen{]}0,1\mathclose{[}$ be arbitrarily fixed. For every $i=1,\ldots,N$, let $\lambda^{*}_{i} =\lambda^{*}_{i} (\rho) > 0$ be the constant given by  Lemma~\ref{lem-rho}. Then, we define
\begin{equation*}
\lambda^{*}=\max_{i=1,\ldots,N} \lambda^{*}_{i}
\end{equation*}
and fix
\begin{equation*}
\lambda_{i} > \lambda^{*}, \quad \text{for every $i=1,\ldots,N$.}
\end{equation*}

Next, we apply Lemma~\ref{lem-r} and Lemma~\ref{lem-R}, obtaining $2N$ constants $r_{i},R_{i}\in\mathopen{]}0,1\mathclose{[}$. Without loss of generality we assume $0<r_{i} < \rho < R_{i} < 1$, for every $i=1,\ldots,N$. Then, we define
\begin{equation*}
r=\min_{i=1,\ldots,N} r_{i} \quad\text{ and }\quad R=\max_{i=1,\ldots,N} R_{i},
\end{equation*}
and so have $0<r<\rho<R<1$.

We are going to compute the coincidence degrees of $L$ and $N$ in the open sets
\begin{equation*}
\mathcal{B}_{d}^{\mathcal{I}} = \prod_{i=1}^{N} B_{i}, \quad \text{with $d=(d_{1},\ldots,d_{N})\in\mathopen{]}0,1\mathclose{[}^{N}$,}
\end{equation*}
where $\mathcal{I}\subseteq \{1,\ldots,N\}$ is a subset of indices, $d_{i}\in\{r,R\}$ if $i\in\{1,\ldots,N\}\setminus\mathcal{I}$, $d_{i}=\rho$ if $i\in\mathcal{I}$, and
\begin{equation*}
B_{i}= 
\begin{cases}
\, B(0,d_{i}), & \text{if $i\notin\mathcal{I}$} \\
\, U_{\rho,I_{i}}, & \text{if $i\in\mathcal{I}$.}
\end{cases}
\end{equation*}
Precisely, we prove that
\begin{equation}\label{eq-deg-B}
\mathrm{D}_{L}(L-N,\mathcal{B}_{d}^{\mathcal{I}}) = 
\begin{cases}
\, 1, & \text{if $\mathcal{I}=\emptyset$,} \\
\, 0, & \text{if $\mathcal{I}\neq\emptyset$.}
\end{cases}
\end{equation}

Firstly, we consider the set $\mathcal{B}_{d}^{\emptyset}$ with $d_{i}\in\{r,R\}$ for every index $i=1,\ldots,N$. We are going to verify that condition \ref{cond-H3} of Lemma~\ref{lem-deg1} is satisfied.
Let $\vartheta\in\mathopen{]}0,1\mathclose{]}$ and suppose, by contradiction, that $p=(p_{1},\ldots,p_{N})$ is a solution of \eqref{system-S_theta} such that at least one of the following cases occurs:
\begin{itemize}
\item there exists an index $k\in\{1,\ldots,N\}$ such that $\|p_{k}\|_{\infty}=r$,
\item there exists an index $k\in\{1,\ldots,N\}$ such that $\|p_{k}\|_{\infty}=R$.
\end{itemize}
In the first case, an application of Lemma~\ref{lem-r} gives that $p_{k}(x)=0$ for all $x\in\overline{\Omega}$, and thus a contradiction. The second case contradicts Lemma~\ref{lem-R}.

Secondly, we consider the set $\mathcal{B}_{d}^{\mathcal{I}}$ with $\mathcal{I}\neq\emptyset$. We are going to verify conditions \ref{cond-H1} and \ref{cond-H2} of Lemma~\ref{lem-deg0}, with the choice $v=(v_{1},\ldots,v_{N})$ where $v_{i}\equiv0$ if $i\notin\mathcal{I}$ and $v_{i}\equiv 1$ if $i\in\mathcal{I}$.
Let $\vartheta\in\mathopen{]}0,1\mathclose{]}$. In order to verify condition \ref{cond-H2} we suppose, by contradiction, that $p=(p_{1},\ldots,p_{N})$ is a solution of \eqref{system-S_theta} such that at least one of the following cases occurs:
\begin{itemize}
\item there exists an index $k\in\{1,\ldots,N\}\setminus\mathcal{I}$ such that $\|p_{k}\|_{\infty}=r$,
\item there exists an index $k\in\mathcal{I}$ such that $\max_{x\in I_{k}} |p_{k}(x)| = \rho$,
\item there exists an index $k\in\{1,\ldots,N\}\setminus\mathcal{I}$ such that $\|p_{k}\|_{\infty}=R$.
\end{itemize}
A contradiction is obtained by applying Lemma~\ref{lem-r} or Lemma~\ref{lem-R} (with $\vartheta=1$) if $k\notin\mathcal{I}$ and Lemma~\ref{lem-rho} if $k\in\mathcal{I}$. 
Next, we integrate the $k$-th equation in \eqref{system-Fa} and pass to the absolute value to obtain
\begin{equation*}
\mu |\Omega| \leq \| \gamma_{k} \|_{L^{1}} \max_{s \in \mathopen{[}0,1\mathclose{]}} f_{k}(s), \quad \text{with $\gamma_{k}=\lambda_{k} \max\{|\alpha_{k}|,|\beta_{k}|\}$.}
\end{equation*}
Therefore, condition \ref{cond-H3} follows for $\mu_{0}= \max \{\| \gamma_{i} \|_{L^{1}} \max_{s \in \mathopen{[}0,1\mathclose{]}} f_{i}(s) / |\Omega| \colon i=1,\ldots,N\}$.

Having proved that formula \eqref{eq-deg-B} holds, for $\mathcal{I},\mathcal{J}\subseteq \{1,\ldots,N\}$ with $\mathcal{I}\cap\mathcal{J}=\emptyset$, we introduce the sets
\begin{equation*}
\mathcal{A}^{\mathcal{I},\mathcal{J}}= \prod_{i=1}^{N} A_{i}, 
\end{equation*}
where
\begin{equation*}
A_{i}= 
\begin{cases}
\, B(0,r), & \text{if $i\in\{1,\ldots,N\}\setminus(\mathcal{I}\cup\mathcal{J})$,} \\
\, U_{\rho,I_{i}}\setminus B(0,r), & \text{if $i\in\mathcal{I}$,} \\
\, B(0,R)\setminus U_{\rho,I_{i}}, & \text{if $i\in\mathcal{J}$.}
\end{cases}
\end{equation*}
The sets $\mathcal{A}^{\mathcal{I},\mathcal{J}}$ are Cartesian products of ``annuli'' in $\mathcal{C}(\overline{\Omega})$, are pairwise disjoint, and, considering all the possible choices of $\mathcal{I}$ and $\mathcal{J}$, their number is $3^{N}$.

As a consequence of a powerful combinatorial argument illustrated in \cite[Appendix~A.1]{BoFeZa-18tams} and exploited in a different framework in the same paper, from \eqref{eq-deg-B} and the additivity property of the degree we deduce that
\begin{equation}\label{eq-deg-A}
\mathrm{D}_{L}(L-N,\mathcal{A}^{\mathcal{I},\mathcal{J}}) = (-1)^{\#\mathcal{I}}.
\end{equation}
We refer to Remark~\ref{rem-4.1} for the explicit derivation of \eqref{eq-deg-A} in the case $N=2$.

Formula \eqref{eq-deg-A} and the existence property of the coincidence degree imply that in each of the sets of the form $\mathcal{A}^{\mathcal{I},\mathcal{J}}$ there exists a solution $p=(p_{1},\ldots,p_{N})$ of \eqref{system-H}.
We observe that, by the maximum principle, it holds that $p_{i}(x) \geq 0$ for all $x\in\overline{\Omega}$, for every $i=1,\ldots,N$, and, moreover, by the definition of $\mathcal{A}^{\mathcal{I},\mathcal{J}}$, we clearly have $p_{i}(x) <1$ for all $x\in\overline{\Omega}$, for every $i=1,\ldots,N$. Hence, $p$ is a solution of \eqref{system-S}.

As a final step, we distinguish between constant solutions, semitrivial solutions and fully nontrivial solutions. We remark that if $\mathcal{I}\cup\mathcal{J}=\{1,\ldots,N\}$ then $A_{i}$ is the set $U_{\rho,I_{i}}\setminus B(0,r)$ or the set $B(0,R)\setminus U_{\rho,I_{i}}$, for every $i=1,\ldots,N$. In such a situation, the solution $p$ in $\mathcal{A}^{\mathcal{I},\mathcal{J}}$ is such that
$0 < r < \|p_{i}\|_{\infty} < R$, for every $i=1,\ldots,N$.
Now, the uniqueness of the constant zero solution for the Cauchy problem associated with \eqref{system-S} is ensured by the fact that the functions $f_{i}$ are of class $\mathcal{C}^1$; therefore, by the strong maximum principle, we deduce that $p$ is a fully nontrivial solution of \eqref{system-S}. The number of partitions of $\{1,\ldots,N\}$ in two sets $\mathcal{I},\mathcal{J}$ with $\mathcal{I}\cap\mathcal{J}=\emptyset$ and $\mathcal{I}\cup\mathcal{J}=\{1,\ldots,N\}$ is $2^{N}$, since, for every $i=1,\ldots,N$, we have two choices
\begin{itemize}
\item[$(i)$] $r<\max_{x\in I_{i}} |p_{i}(x)| < \rho$,
\item[$(ii)$] $\rho<\max_{x\in I_{i}} |p_{i}(x)| < R$.
\end{itemize}
In conclusion, there are $2^{N}$ fully nontrivial solutions of \eqref{system-S}.

In order to count the trivial solutions, we list the two possibilities for each component
\begin{itemize}
\item[$(iii)$] $p_{i}\equiv 0$,
\item[$(iv)$] $p_{i}\equiv 1$,
\end{itemize}
obtaining that their number is $2^{N}$.
At last, in order to count the semitrivial solutions, we have to consider the four possibilities $(i)$--$(iv)$ for each component and thus their number is $4^{N}-2^{N}$.
The proof is thus concluded.
\qed

\begin{remark}\label{rem-4.1}
In order to clarify the derivation of formula \eqref{eq-deg-A}, we now give the direct computation of the degree in the case $N=2$, without using the combinatorial argument developed in \cite{BoFeZa-18tams}. 
As an example we compute the degree in $A^{\emptyset,\{1\}}$.
Starting from
\begin{equation}\label{eq-exc}
A^{\emptyset,\{1\}} = \bigl{(} B(0,R) \times B(0,r) \bigr{)} \setminus \overline{\bigl{[} A^{\emptyset,\emptyset} \cup A^{\{1\},\emptyset} \bigr{]}},
\end{equation}
our goal is to apply the additivity property of the coincidence degree. Accordingly we first compute $\mathrm{D}_{L}(L-N,\mathcal{A}^{\emptyset,\emptyset})$ and $\mathrm{D}_{L}(L-N,\mathcal{A}^{\{1\},\emptyset})$.
Since $\mathcal{A}^{\emptyset,\emptyset}=B(0,r)\times B(0,r)$, formula \eqref{eq-deg-B} implies that
\begin{equation*}
\mathrm{D}_{L}(L-N,\mathcal{A}^{\emptyset,\emptyset})=1.
\end{equation*}
Next, we observe that $A^{\{1\},\emptyset} = \bigl{(} U_{\rho,I_{1}} \times B(0,r) \bigr{)} \setminus \overline{A^{\emptyset,\emptyset}}$. From formula \eqref{eq-deg-B} and the excision property of the degree (noticing that there are no solutions of \eqref{system-S} on the boundary of the sets $\mathcal{A}^{\emptyset,\emptyset}$ and $\mathcal{A}^{\{1\},\emptyset}$, as already shown) we obtain
\begin{equation*}
\mathrm{D}_{L}(L-N,\mathcal{A}^{\{1\},\emptyset})=\mathrm{D}_{L}(L-N,U_{\rho,I_{1}} \times B(0,r)) -\mathrm{D}_{L}(L-N,\mathcal{A}^{\emptyset,\emptyset}) =-1.
\end{equation*}
As a final step, arguing as above, from \eqref{eq-exc} and the additivity of the degree, we have
\begin{align*}
&\mathrm{D}_{L}(L-N,A^{\emptyset,\{1\}})=\\
&=\mathrm{D}_{L}(L-N,B(0,R)\times B(0,r))-\mathrm{D}_{L}(L-N,\mathcal{A}^{\emptyset,\emptyset}) - \mathrm{D}_{L}(L-N,\mathcal{A}^{\{1\},\emptyset}) 
\\&=1-1-(-1)=1.
\end{align*}

Proceeding as above, one can obtain the values of the degree in all the remaining sets of the form $A^{\mathcal{I},\mathcal{J}}$, namely
\begin{align*}
&\mathrm{D}_{L}(L-N,\mathcal{A}^{\{2\},\emptyset})=-1,
&&\mathrm{D}_{L}(L-N,\mathcal{A}^{\{1,2\},\emptyset})=1,
\\
&\mathrm{D}_{L}(L-N,\mathcal{A}^{\{1\},\{2\}})=-1,
&&\mathrm{D}_{L}(L-N,\mathcal{A}^{\{2\},\{1\}})=-1,
\\
&\mathrm{D}_{L}(L-N,\mathcal{A}^{\emptyset,\{2\}})=1,
&&\mathrm{D}_{L}(L-N,\mathcal{A}^{\emptyset,\{1,2\}})=1.
\end{align*}
The four fully nontrivial solutions are cointained in the sets $\mathcal{A}^{\{1\},\{2\}}$, $\mathcal{A}^{\{2\},\{1\}}$, $\mathcal{A}^{\{1,2\},\emptyset}$, $\mathcal{A}^{\emptyset,\{1,2\}}$.

For a general integer $N$ one can prove formula \eqref{eq-deg-A} using the combinatorial argument in \cite{BoFeZa-18tams} or by induction using  the excision property of the degree.
\hfill$\lhd$
\end{remark}

\begin{remark}\label{rem-4.2}
From the proof, one can observe that Theorem~\ref{th-main} is still valid if the nonlinearities $f_{i}$ are assumed to be only continuous in $\mathopen{[}0,1\mathclose{]}$, and continuously differentiable in a right neighbourhood of $s = 0$ and in a left neighbourhood of $s=1$.
\hfill$\lhd$
\end{remark}

\section{Application to selection-migration models in population genetics}\label{section-5}

In this section we show how our abstract result applies to a selection-migration model in population genetics (cf.~\cite{Bu-00,Bu-14}).
In order to gradually introduce our motivating model, discussed in Section~\ref{section-5.2}, we first briefly recall the basics of selection-migration models, and then, in Section~\ref{section-5.1}, revise the case of a diploid population with selection only at one locus.

We study the continuous distribution of a given trait of a population on a bounded habitat, that we denote with $\Omega$. The evolution of the state of the population is guided by two main mechanisms: dispersal in the habitat and competition between genotypes.
Let us therefore denote with $p=p(t,x)\colon \mathbb{R}\times \Omega\to \mathopen{[}0,1\mathclose{]}$ the frequency of a trait $a$ at time $t$ and position $x$. The evolution of the population is described by the nonlinear reaction-diffusion equation:
\begin{equation}\label{eq:gen_reacdiff}
\frac{\partial p}{\partial t}= \frac{\kappa(x)}{\lambda} \Delta p+F(x,p).
\end{equation}
The Laplacian $\Delta$ describes the dispersal of the population, weighted by a coefficient $\kappa(x)>\kappa_{0}>0$. The parameter $\lambda>0$ controls the ratio between the rates of selection and diffusion. The term $F(x,p)$ accounts for the selection on the traits and can be expressed in the form 
\begin{equation}\label{eq:fitnessrate}
F(x,p)=p(r_{a}(x)-\bar r(x,p)),
\end{equation}
where $r_{a}$ is the fitness of the trait and $\bar r$ is the average fitness of the population, both taken at $x$. In other words, $r_{a}-\bar r$ is the relative fitness of the trait.
We observe that
\begin{align*}
F(x,0)=F(x,1)=0.
\end{align*}
This property means that if a trait is absent in the population, it will remain so; indeed, we assume no mutation, nor constant immigration rate from an external environment. 
As we will derive in the models we discuss below, in some circumstances it can be observed that the term $F(x,p)$ is factorised in the following way
\begin{equation} \label{eq:Fproduct}
F(x,p)=\omega(x)f(p).
\end{equation}
The factor $\omega(x)$  describes whether there is a competitive advantage ($\omega(x)>0$) or disadvantage ($\omega(x)<0$) for the trait with respect to the population in a specific place $x$ of the habitat. The factor $f(p)$ describes the frequency-dependent effects on the selection. 

Moreover, since we consider bounded habitats $\Omega$, we impose the condition of zero normal derivative at the boundary
\begin{equation}\label{eq:bvc}
\dfrac{\partial p}{\partial \nu}=0, \quad \text{on $\partial \Omega$,}
\end{equation}
meaning that there is no population flow at the boundary into or out of the habitat.

The most famous case of equation \eqref{eq:gen_reacdiff} is \emph{Fisher's equation}, corresponding to constant $\omega(x)=\omega$ and $\kappa(x)=\kappa$, and $f(p)=p(1-p)$. 
In the framework above, it corresponds to the competition between two types $a$ and $A$ with constant fitnesses $r_{a}$ and $r_{A}$, so that, by \eqref{eq:fitnessrate},
\begin{equation*}
F(x,p)=p \bigl{[} r_{a}-pr_{a}-(1-p)r_{A} \bigr{]}=p(1-p)(r_{a}-r_{A}).
\end{equation*}

We restrict our analysis to the case of a one-dimensional habitat. Our interest is focused on the search for stationary solution for \eqref{eq:gen_reacdiff}, corresponding to the solutions of 
\begin{equation*}
p''+\lambda\,\frac{\omega(x)}{\kappa(x)}f(p)=0
\end{equation*}
with the boundary condition \eqref{eq:bvc}. In particular, we are interested in \emph{fully nontrivial solution}, meaning that $0<p<1$, corresponding to steady states for the population where the two traits coexist. Such solutions are often called \emph{clines}. Indeed, the main motivation of the models we consider in the following is to investigate how a heterogeneous habitat (i.e.~a sign-changing $\omega(x)$) can be a mechanism for the preservation of polymorphism in the population.

\subsection{Diploid population with selection at one locus}\label{section-5.1}

 We now extend our discussion to the case of a diploid population with two alleles 
$a$ and $A$ (cf.~\cite{Fl-75}). Hence we consider not any longer the frequency of a trait, but instead the frequency of the allele $a$ in the population, which we denote by $p$. We assume random mating and that the allele frequencies are at equilibrium, so that the genotype distribution follows Hardy--Weinberg law.
We denote the fitness of the genotypes as
\renewcommand\arraystretch{1.5}
\begin{equation*}
\begin{array}{ccc}
aa & Aa & AA\\\hline
r_{aa}(x) & r_{Aa}(x) & r_{AA}(x)
\end{array}
\end{equation*}
and hence, recalling that $aa$-individuals account for two copies of the allele $a$ in the genetic pool, we replace \eqref{eq:fitnessrate} by
\begin{equation}\label{eq:reacElisa}
F(x,p)=2p ^{2}(r_{aa}(x)-\bar r(x,p))+2p(1-p)(r_{Aa}(x)-\bar r(x,p)),
\end{equation}
where
\begin{equation*}
\bar r(x,p)=p ^{2}r_{aa}(x)+2p(1-p)r_{Aa}(x)+(1-p) ^{2}r_{AA}(x).
\end{equation*}
Often it is assumed that $r_{Aa}(x)$ can expressed in terms of the other two rates using a parameter $h$, according to
\begin{equation*}
r_{Aa}(x)=\frac{r_{aa}(x)+r_{AA}(x)}{2}+h\,\frac{r_{aa}(x)-r_{AA}(x)}{2},
\end{equation*}
which gives $f(p)=p(1-p)(1+h-2hp)$, cf.~\cite{Fl-75}.
In this framework, two specific situations are remarkable. The first one is the case when $r_{Aa}(x)=\frac{1}{2}(r_{aa}(x)+r_{AA}(x))$, i.e.~$h=0$, which is equivalent to a space dependent Fisher's equation.
The second remarkable case occurs when we have complete dominance of an allele, which without loss of generality we assume to be $A$. In other words, we have $r_{Aa}(x)=r_{AA}(x)$, i.e.~$h=-1$.
In this case a straightforward computation shows that $F(x,p)$ can be expressed in the form \eqref{eq:Fproduct} with 
\begin{equation*}
f(p)=p ^{2}(1-p), \qquad \omega(x)=2(r_{aa}(x)-r_{AA}(x)).
\end{equation*}

As in the previous case, we are interested in mechanisms for the coexistence of the two alleles in the population. The simpler one is heterosis, namely assuming the heterozygote is fitter than either homozygote ($r_{Aa}>r_{aa}$ and $r_{Aa}>r_{AA}$). We are however interested in situations where polymorphism is preserved by the combination of migration and selection, and not by selection alone.
Following Fleming \cite{Fl-75} and Henry \cite{He-81} the classical approach to study such a situation is to assume that $F$ is of the form \eqref{eq:Fproduct} where $f$ is a $\mathcal{C}^{1}$-function satisfying
\begin{equation}\label{cond:flinear}
f(0)=f(1)=0, \qquad f'(0)>0>f'(1), \qquad f(s)>0, \quad \text{for all $s\in\mathopen{]}0,1\mathclose{[}$,}
\end{equation}
whereas $\omega(x)$ is sign-changing. This framework covers the cases corresponding to $h\in \mathopen{]}-1,1\mathclose{[}$ and has been extensively studied, showing different behaviour according to the sign of the average of $\omega(x)$.
We mention for instance \cite{LoNa-02,LoNaNi-13,Na-89} and refer to \cite{NaSuDu-19,So-18} for recent surveys.

However \eqref{cond:flinear} leaves out the case of complete dominance $h=-1$, where we have $f'(0)=0$ (or $f'(1)=0$, if we consider the symmetric case $h=1$), which is indeed the superlinear framework we introduced with conditions \ref{cond:fi*} and \ref{cond:fi0}.

Such a superlinear behaviour in the origin has proven to be suitable to the application of analytical tools. Within the standing assumptions, it has been shown that in an overall hostile environment ($\omega(x)$ with negative average) with a sufficiently intense selection ($\lambda$ large) we have at least two nontrivial solutions \cite{BoFeSo-PP,NaNiSu-10}.
Moreover, using topological degree \cite{BoFeSo-PP} or shooting methods \cite{FeSo-18non,FeSo-18na}, the existence of multiple disconnected patches of favourable habitat (i.e.~the existence of multiple disjoint subintervals of $\Omega$ where $\omega(x)$ is positive) produces an increasingly large number of clines.
In the specific case $f(p)=p^{2}(1-p)$ some stability results for the clines have also been obtained \cite{LoNiSu-10}.

\subsection{Diploid population with selection at two loci}\label{section-5.2}

We now extend the previous scenario to a multilocus model. 
Therefore, we have to discuss first in which way we expect the state at one locus to influence the evolution at the other one.
One possibility considered in literature to couple the selection processes at the two (or more) loci is \emph{linkage disequilibrium}. It has been shown that linkage disequilibrium produces a steepening of the cline \cite{Bu17,Sl75} and that such deviation by a Hardy--Weinberg distribution has little evolutionary significance \cite{Na-76}. We also mention \cite{SLB19} for stability results. In all these works linkage disequilibrium is the only coupling effect, since additivity of the fitnesses of the two loci is assumed (namely, the effects on the fitness of the individual produced by the genotypes at different loci are independent of each other).
Our approach is complementary. We assume linkage equilibrium, but allow general non-additive interactions between the fitnesses of the genotypes at different loci.  In simpler words, we assume that genotypes at different loci may produce a combined effect on the fitness of the individual is greater than the sum of their single effects. A general treatment of this case in the weak-selection limit (hence neglecting the spatial dependence) has been recently presented in \cite{PHB18}.

In more details, we consider a diploid population with two alleles $a$ and $A$ at one locus and other two alleles $b$ and $B$ at another locus. We assume complete dominance at both loci, with the alleles $a$ and $b$ being the recessive ones. We denote with $p$ and $q$ the frequencies of the alleles $a$ and $b$ in the population. We assume random mating, linkage equilibrium between the two loci, and that the allele frequencies are at equilibrium, so that Hardy--Weinberg law applies independently to each locus, namely
\begin{equation*}
\begin{array}{r|ccc}
\text{\tiny\shortstack{ genotype \\ frequency}}&aa & Aa & AA\\\hline
bb& p^{2}q^{2} & 2p(1-p)q^{2} & (1-p)^{2}q^{2}\\
Bb& 2p^{2}q(1-q) & 4p(1-p)q(1-q) & 2(1-p)^{2}q(1-q)\\
BB&	p^{2}(1-q)^{2} & 2p(1-p)(1-q)^{2} & (1-p)^{2}(1-q)^{2}\\
\end{array}
\end{equation*}
Recalling that with dominance at both loci we have only four possible phenotypes, we denote the fitness of the genotypes as
\begin{equation*}
\begin{array}{r|ccc}
\text{\tiny\shortstack{ genotype \\ fitness}}&aa & Aa & AA\\\hline
bb& r_{ab}(x) & r_{Ab}(x) & r_{Ab}(x)\\
Bb& r_{aB}(x) & r_{AB}(x) & r_{AB}(x)\\
BB&	r_{aB}(x) & r_{AB}(x) & r_{AB}(x)\\
\end{array}
\end{equation*}
(to simplify the notation in the subscripts, we are using a lowercase letter when the recessive allele is expressed, and an uppercase one when the dominant is expressed).

Proceeding analogously to \eqref{eq:reacElisa} for both alleles, and recalling that the fitness of an allele depends also by the allele distribution in the other locus, we can recover the fitnesses of the alleles $a$ and $b$, namely
\begin{align*}
F_{a}(x,p,q)&=2\bigl{[}q ^{2}\bigl(r_{ab}(x)-r_{Ab}(x)-r_{aB}(x)+r_{AB}(x)\bigr)
\\ &\qquad+r_{aB}(x)-r_{AB}(x)\bigr{]}p ^{2}(1-p),\\
F_{b}(x,p,q)&=2\bigl[p ^{2}\bigl(r_{ab}(x)-r_{Ab}(x)-r_{aB}(x)+r_{AB}(x)\bigr)
\\ & \qquad+r_{Ab}(x)-r_{AB}(x)\bigr]q ^{2}(1-q).
\end{align*}
Let us write
\begin{align*}
\omega^{ab}(x)&=r_{ab}(x)-r_{Ab}(x), &
\omega^{aB}(x)&=r_{aB}(x)-r_{AB}(x), \\
\omega^{ba}(x)&=r_{ab}(x)-r_{aB}(x), &
\omega^{bA}(x)&=r_{Ab}(x)-r_{AB}(x).
\end{align*}
This means, for instance, that $\omega^{aB}$ measures the advantage (if positive) or disadvantage (if negative) of the homozygous-recessive $aa$ versus the homozygous-dominant $AA$ (or equivalently versus the heterozygous $Aa$) when they are both expressed coupled with the homozygous $BB$ (or equivalently versus the heterozygous $Bb$).
We also set
\begin{equation}\label{eq:fdomin}
f(s)=2s ^{2}(1-s).
\end{equation}
Hence the steady states of the population correspond to the solutions of the Neumann problem associated with the system
\begin{equation}\label{sys-intro}
\begin{cases}
\, p''+\lambda\,\dfrac{q ^{2}\omega^{ab}(x)+(1-q ^{2})\omega^{aB}(x)}{\kappa(x)}\,f(p)=0, \vspace{5pt}\\
\, q''+\lambda\,\dfrac{p ^{2}\omega^{ba}(x)+(1-p ^{2})\omega^{bA}(x)}{\kappa(x)}\,f(q)=0.
\end{cases}
\end{equation}
Let us define
\begin{align*}
\alpha_{a}(x)=\frac{\min \{\omega^{ab}(x),\omega^{aB}(x)\}}{\kappa(x)},
&& \beta_{a}(x)=\frac{\max \{\omega^{ab}(x),\omega^{aB}(x)\}}{\kappa(x)},\\
\alpha_{b}(x)=\frac{\min \{\omega^{ba}(x),\omega^{bA}(x)\}}{\kappa(x)},
&& \beta_{b}(x)=\frac{\max \{\omega^{ba}(x),\omega^{bA}(x)\}}{\kappa(x)},
\end{align*}
and
\begin{align*}
&w_{p}(x,q)=\dfrac{q ^{2}\omega^{ab}(x)+(1-q ^{2})\omega^{aB}(x)}{\kappa(x)},\\
&w_{q}(x,p)=\dfrac{p ^{2}\omega^{ba}(x)+(1-p ^{2})\omega^{bA}(x)}{\kappa(x)}.
\end{align*}
In order to satisfy \ref{cond:wi*}, for $i=p,q$, we require:
\begin{enumerate}[label=\textup{$(\roman*)$}]
\item there exists two intervals $I_{p},I_{q}\subseteq \Omega$ such that $\alpha_{a}(x)>0$ for every $x\in I_{p}$, and $\alpha_{b}(x)>0$ for every $x\in I_{q}$; \label{cond:modalpha}
\item $\displaystyle \int_\Omega \beta_{a}(x)\,\mathrm{d} x<0$ and $\displaystyle \int_\Omega \beta_{b}(x)\,\mathrm{d} x<0$. \label{cond:modbeta}
\end{enumerate}
These conditions on the sign of the weights means that in some place the environment is favourable, or not, to the recessive homozygous in one locus not regarding of the genotype in the other locus.

We have the following straightforward corollary of Theorem~\ref{th-main}.

\begin{corollary}\label{cor-intro}
Let $\Omega$ be an open bounded interval, $\kappa\colon \Omega\to \mathopen{[}\kappa_{0},+\infty\mathclose{[}$, with $\kappa_{0}>0$, be a Lebesgue integrable positive function, and $f\colon \mathopen{[}0,1\mathclose{]}\to \mathbb{R}$ be defined as in \eqref{eq:fdomin}. Assume that the functions $r_{ab},r_{Ab},r_{aB},r_{AB}\colon \Omega\to \mathbb{R}$ are Lebesgue integrable on $\Omega$ and satisfy \ref{cond:modalpha} and \ref{cond:modbeta}.
Then, there exists $\lambda^{*}>0$ such that for every $\lambda>\lambda^{*}$ there exist at least four fully nontrivial solutions of the Neumann problem associated with \eqref{sys-intro}.
\end{corollary}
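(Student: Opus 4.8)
The plan is to deduce Corollary~\ref{cor-intro} as the case $N=2$ of Theorem~\ref{th-main}, after recognising the Neumann problem associated with \eqref{sys-intro} as an instance of \eqref{system-S}. Concretely, I would set $p_{1}=p$, $p_{2}=q$, take equal intensities $\lambda_{1}=\lambda_{2}=\lambda$, let $f_{1}=f_{2}=f$ with $f$ as in \eqref{eq:fdomin}, and identify the coupling-weights as $w_{1}=w_{p}$ and $w_{2}=w_{q}$. It then only remains to check that these data satisfy the structural hypotheses \ref{cond:wi*}, \ref{cond:fi*} and \ref{cond:fi0} demanded by the theorem, so that its conclusion delivers $2^{2}=4$ fully nontrivial solutions once $\lambda$ is large enough.

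First I would dispatch the conditions on the nonlinearity. Since $f(s)=2s^{2}(1-s)$ is a polynomial, it is of class $\mathcal{C}^{1}$ on $\mathopen{[}0,1\mathclose{]}$ with values in $\mathopen{[}0,+\infty\mathclose{[}$. A direct evaluation gives $f(0)=f(1)=0$ and $f(s)>0$ for $s\in\mathopen{]}0,1\mathclose{[}$, which is \ref{cond:fi*}, while $f'(s)=4s-6s^{2}$ yields $f'(0)=0$, which is the superlinear condition \ref{cond:fi0}. This is the routine part.

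The substantive verification concerns the coupling-weights. Because $r_{ab},r_{Ab},r_{aB},r_{AB}\in L^{1}(\Omega)$ and $1/\kappa$ is bounded above by $1/\kappa_{0}$, the functions $\omega^{ab}/\kappa$, $\omega^{aB}/\kappa$, $\omega^{ba}/\kappa$, $\omega^{bA}/\kappa$ lie in $L^{1}(\Omega)$; each of $w_{p}$ and $w_{q}$ is thus integrable in $x$ and depends polynomially, hence continuously, on the remaining variable, so both are $L^{1}$-Carath\'{e}odory. For the sign condition \ref{cond:wi*} I would exploit the convex-combination structure: for a.e.~$x$ and every $q\in\mathopen{[}0,1\mathclose{]}$ the value $w_{p}(x,q)$ is a convex combination, with coefficients $q^{2}$ and $1-q^{2}$, of $\omega^{ab}(x)/\kappa(x)$ and $\omega^{aB}(x)/\kappa(x)$. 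Since $\kappa>0$, this yields at once the $q$-independent envelope
\begin{equation*}
\alpha_{a}(x)\leq w_{p}(x,q)\leq\beta_{a}(x),\quad\text{for a.e.~$x\in\Omega$, for all $q\in\mathopen{[}0,1\mathclose{]}$.}
\end{equation*}
Condition \ref{cond:modalpha}, namely $\alpha_{a}>0$ on $I_{p}$, then forces $w_{p}(\cdot,q)\geq\alpha_{a}>0$ on $I_{p}$ for every $q$, which supplies the nonnegativity and non-triviality of $w_{p}$ on $I_{p}$ together with $\alpha_{a}\geq0$, $\alpha_{a}\not\equiv0$ on $I_{p}$; condition \ref{cond:modbeta} gives $\int_{\Omega}\beta_{a}\,\mathrm{d}x<0$. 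The same reasoning applies verbatim to $w_{q}$, with $\alpha_{b}$, $\beta_{b}$ and $I_{q}$ in place of $\alpha_{a}$, $\beta_{a}$ and $I_{p}$. Hence \ref{cond:wi*} holds for $i=1,2$.

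With the hypotheses in place, Theorem~\ref{th-main} produces a threshold $\lambda^{*}>0$; since the single intensity $\lambda$ serves as both $\lambda_{1}$ and $\lambda_{2}$, any $\lambda>\lambda^{*}$ ensures $\lambda_{i}>\lambda^{*}$ for $i=1,2$, and the theorem guarantees at least four fully nontrivial solutions of \eqref{system-S}, i.e.~of the Neumann problem for \eqref{sys-intro}. I expect no genuine obstacle: the only step meriting care is the convex-combination bound, which produces the envelope $\alpha_{a}\leq w_{p}\leq\beta_{a}$ uniformly in $q$ (and symmetrically for $w_{q}$), and this is precisely what renders \ref{cond:wi*} available despite the presence of the coupling.
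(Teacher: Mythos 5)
Your proposal is correct and follows exactly the route the paper intends: the corollary is stated there as a ``straightforward'' consequence of Theorem~\ref{th-main} with $N=2$, $w_{1}=w_{p}$, $w_{2}=w_{q}$, $f_{1}=f_{2}=f$ and $\lambda_{1}=\lambda_{2}=\lambda$, with the definitions of $\alpha_{a},\beta_{a},\alpha_{b},\beta_{b}$ and conditions \ref{cond:modalpha}--\ref{cond:modbeta} engineered precisely so that the convex-combination bound yields \ref{cond:wi*}. Your explicit verification of \ref{cond:fi*}, \ref{cond:fi0} and of the $L^{1}$-Carath\'{e}odory and envelope properties is exactly the check the paper leaves implicit.
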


\begin{remark}\label{rem:multilocus}
We remark that the same construction applies if we consider a model with $N$ loci, instead of two, with Corollary \ref{cor-intro} providing the existence of at least $2^N$ fully nontrivial solutions.
\hfill$\lhd$
\end{remark}

\section*{Acknowledgements}

We are grateful to Elisa Sovrano for her useful remarks on improving the presentation of the paper.

\bibliographystyle{elsart-num-sort}
\bibliography{FeGi-biblio}

\end{document}